\newcommand*{\mailto}[1]{\href{mailto:#1}{\nolinkurl{#1}}}
\newcommand{\arxiv}[1]{\href{http://arxiv.org/abs/#1}{arXiv:#1}}
\newcommand{\msc}[1]{\href{http://www.ams.org/msc/msc2010.html?t=&s=#1}{#1}}
\newif\iflong
\newtheorem{theorem}{Theorem}[section]
\newtheorem{lemma}[theorem]{Lemma}
\newtheorem{corollary}[theorem]{Corollary}
\newtheorem{remark}[theorem]{Remark}
\theoremstyle{definition}
\newtheorem{conjecture}{Conjecture}[section]
\newcommand{\R}{{\mathbb R}}
\newcommand{\N}{{\mathbb N}}
\newcommand{\C}{{\mathbb C}}
\newcommand{\cH}{{\mathcal H}}
\newcommand{\cG}{{\mathcal G}}
\newcommand{\id}{{\mathbbm{1}}}
\newcommand{\OO}{{\mathcal O}}
\newcommand{\SU}{\mathrm{SU}}
\newcommand{\GL}{\mathrm{GL}}
\newcommand{\slt}{\mathfrak{sl}(2,\R)}
\newcommand{\SLt}{\mathrm{SL}(2,\R)}
\newcommand{\p}{{\bf p}}
\newcommand{\g}{{\bf g}}
\newcommand{\I}{\mathrm{i}}
\newcommand{\E}{\mathrm{e}}
\newcommand{\nn}{\nonumber}
\newcommand{\be}{\begin{equation}}
\newcommand{\ee}{\end{equation}}
\newcommand{\ti}{\tilde}
\newcommand{\abs}[1]{\left\lvert #1 \right\rvert}
\newcommand{\norm}[1]{\left\lVert #1 \right\rVert}
\newcommand{\floor}[1]{\lfloor #1 \rfloor}
\newcommand{\lam}{\lambda}
\newcommand{\al}{\alpha}
\newcommand{\cB}{\mathcal{B}}
\newcommand\half{\frac12}
\newcommand\thalf{\tfrac12}
\newcommand{\hyp}[5]{\,\mbox{}_{#1}F_{#2}\!\left(
  \genfrac{}{}{0pt}{}{#3}{#4};#5\right)}
\newcommand{\dlmf}[1]{%
\cite[%
 \def\nextitem{\def\nextitem{, }}%
 \@for \el:=#1\do{\nextitem\expandafter\dlmf@eq@href\el...\end}%
]{dlmf}%
}
\def\dlmf@eq@href#1.#2.#3.#4\end{%
  \href{http://dlmf.nist.gov/#1.#2.E#3}{(#1.#2.#3)}}
\numberwithin{equation}{section}
\begin{document}

\title[Bernstein-type Inequalities and Dispersion Estimates]{Jacobi Polynomials, Bernstein-type Inequalities and Dispersion Estimates for the Discrete Laguerre Operator}

\author[T. Koornwinder]{Tom Koornwinder}
\address{Korteweg-de Vries Institute for Mathematics\\ University of Amsterdam\\
1090 GE Amsterdam\\ Netherlands}
\email{\mailto{T.H.Koornwinder@uva.nl}}
\urladdr{\url{https://staff.fnwi.uva.nl/t.h.koornwinder/}}

\author[A. Kostenko]{Aleksey Kostenko}
\address{Faculty of Mathematics and Physics\\ University of Ljubljana\\ Jadranska ul.\ 19\\ 1000 Ljubljana\\ Slovenia\\ and Faculty of Mathematics\\ University of Vienna\\ 
Oskar-Morgenstern-Platz 1\\ 1090 Vienna\\ Austria}
\email{\mailto{Aleksey.Kostenko@fmf.uni-lj.si};\ \mailto{Oleksiy.Kostenko@univie.ac.at}}
\urladdr{\url{http://www.mat.univie.ac.at/~kostenko/}}

\author[G. Teschl]{Gerald Teschl}
\address{Faculty of Mathematics\\ University of Vienna\\
Oskar-Morgenstern-Platz 1\\ 1090 Wien\\ Austria\\ and International 
Erwin Schr\"odinger Institute for Mathematical Physics\\ 
Boltzmanngasse 9\\ 1090 Wien\\ Austria}
\email{\mailto{Gerald.Teschl@univie.ac.at}}
\urladdr{\url{http://www.mat.univie.ac.at/~gerald/}}

\thanks{{\it Research supported by the Austrian Science Fund (FWF) under Grant No.\ P26060}}
\thanks{Adv. Math. {\bf 333}, 796--821 (2018)}

\keywords{Schr\"odinger equation, dispersive estimates, Jacobi polynomials}
\subjclass[2010]{Primary \msc{33C45}, \msc{47B36}; Secondary \msc{81U30}, \msc{81Q05}}

\begin{abstract}
The present paper is about Bernstein-type estimates for Jacobi polynomials and their applications to various branches in mathematics.
This is an old topic but we want to add a new wrinkle by establishing some intriguing connections with dispersive estimates for a certain class
of Schr\"odinger equations whose Hamiltonian is given by the generalized Laguerre operator. More precisely, we show that dispersive estimates
for the Schr\"odinger equation associated with the generalized Laguerre operator
are connected with Bernstein-type inequalities for Jacobi polynomials. We use known uniform estimates for Jacobi polynomials
to establish some new dispersive estimates. In turn, the optimal dispersive decay estimates lead to new Bernstein-type inequalities.
\end{abstract}

\maketitle

\section{Introduction}
To set the stage, for $\alpha$, $\beta>-1$, let
$w^{(\alpha,\beta)}(x)= (1-x)^\alpha (1+x)^\beta$ for $x\in (-1,1)$
be a Jacobi weight.
The corresponding orthogonal polynomials $P_n^{(\alpha,\beta)}$,
normalized by
\be\label{eq:normal}
P_n^{(\alpha,\beta)} (1) = \binom{n+\alpha}{n} = \frac{(\alpha+1)_n}{n!}
\ee
for all $n\in\N_0$ (see \eqref{K6} for notation of Pochhammer symbols
and binomial coefficients), are called the {\em Jacobi polynomials}.
They are expressed as (terminating) Gauss hypergeometric
series \eqref{K7}
by \cite[(4.21.2)]{sz}
\be\label{eq:01}
\frac{P_n^{(\alpha,\beta)}(x)}{P_n^{(\alpha,\beta)}(1)} =
\hyp21{-n,n+\alpha+\beta+1}{\alpha+1}{\frac{1-x}{2}}.
\ee
They also satisfy Rodrigues' formula \cite[(4.3.1), (4.3.2)]{sz}
\begin{align}
P_n^{(\alpha,\beta)}(x) &=\sum_{k=0}^n \binom{n+\alpha}{n-k} \binom{n+\beta}{k} \left(\frac{x-1}{2}\right)^k\left(\frac{x+1}{2}\right)^{n-k}\label{K5}\\[1mm]
&= \frac{(-1)^n}{2^n n!}(1-x)^{-\alpha}(1+x)^{-\beta} \frac{d^n}{dx^n} \big\{(1-x)^{\alpha+n}(1+x)^{\beta+n} \big\}.\label{eq:01a}
\end{align}
Note that, by \eqref{K5}, $P_n^{(\alpha,\beta)}(x)$ is for given $n$
a polynomial in $x$, $\alpha$ and $\beta$. Thus, if we don't need
the orthogonality relations of the Jacobi polynomials, then we are not 
restricted by the bounds $\alpha,\beta>-1$.

The (squared normalized)
$L^2$ norm of $P_n^{(\alpha,\beta)}$ is given by \cite[(4.3.3)]{sz}
\begin{multline}\label{eq:02}
\frac{\Gamma(\alpha+\beta+2)}
{2^{\alpha+\beta+1}\Gamma(\alpha+1)\Gamma(\beta+1)}
\int_{-1}^1 |P_n^{(\alpha,\beta)}(x)|^2 w^{(\alpha,\beta)}(x)dx\\
= \frac{n+\alpha+\beta+1}{2n+\alpha+\beta+1}\,
\frac{(\alpha+1)_n(\beta+1)_n}{(\alpha+\beta+2)_n\,n!}\,.
\end{multline}
Jacobi polynomials include the ultraspherical (Gegenbauer) polynomials
\cite[(4.37.1)]{sz}
\be\label{eq:gegenpol}
P_n^{(\lambda)}(x) := \frac{(2\lambda)_n}{(\lambda+1/2)_n}\,
P_n^{(\lambda-\frac{1}{2},\lambda-\frac{1}{2})}(x),
\ee
where $\lambda>-1/2$ (for $\lambda=0$, one needs to replace \eqref{eq:gegenpol} by a suitable limit, see \cite[Eq. (4.7.8)]{sz}),  and the Legendre polynomials
\be\label{eq:legenpol}
P_n(x) := P_n^{(1/2)}(x) = P_n^{(0,0)}(x) =
\frac{1}{2^nn!}\frac{d^n}{dx^n} (x^2-1)^{n}.
\ee
We shall denote the corresponding orthonormal polynomials by ${\p}_n^{(\alpha,\beta)}$ for Jacobi, $\p^{(\lambda)}_n$ for Gegenbauer, and $\p_n$ for Legendre polynomials.
 
The Rodrigues formula \eqref{eq:01a} immediately implies
\be\label{eq:symmetry}
P_n^{(\alpha,\beta)}(-x) = (-1)^nP_n^{(\beta,\alpha)}(x),
\ee
and hence 
\be\label{eq:normal_b}
P_n^{(\alpha,\beta)} (-1) =(-1)^n \binom{n+\beta}{n} =
(-1)^n\,\frac{(\beta+1)_n}{n!}\,.
\ee
It is well known that the absolute value of $P_n^{(\alpha,\beta)}$ attains its maximum at the endpoints of the interval $[-1,1]$ 
\be\label{eq:Cunif}
\max_{x\in [-1,1]}\big|P_n^{(\alpha,\beta)}(x)\big| = \max_{x\in\{-1,1\}} \big|P_n^{(\alpha,\beta)}(x)\big|
=\binom{n+\max(\alpha,\beta)}{n},
\ee
if $\max(\alpha,\beta)\ge -1/2$ (see \cite[Theorem 7.32.1]{sz}). 

The asymptotic behavior of Jacobi polynomials for large $n$ is rather well understood (see, e.g., \cite[Chapter VIII]{sz}), however, almost all these formulas are not uniform in $\alpha$ and $\beta$.
The main focus of the present paper is on uniform estimates for
\be\label{eq:Bern_ab}
(1-x)^{a}(1+x)^{b} \big|P_n^{(\alpha,\beta)}(x)\big|
\ee
on the whole segment of orthogonality $[-1,1]$ with some $a\ge 0$ and $b\ge 0$ (which might depend on $\alpha$ and $\beta$).
Historically, the first result of this type is Bernstein's inequality\footnote{In order to avoid confusions with the Bernstein inequality for (algebraic) polynomials in the unit disk ($\max_{|z|\le 1}|P'(z)| \le n\cdot\max_{|z|\le 1}|P(z)|$, where $n$ is the degree of $P$), throughout the text ``Bernstein's inequality'' should read as ``Bernstein's inequality for Legendre/Gegenbauer/Jacobi polynomials" meaning the uniform (weighted) estimate for the corresponding family of orthogonal polynomials.} for the Legendre polynomials (\cite[Theorem 7.3.3]{sz})
\be\label{eq:Bernstein}
(1-x^2)^{1/4} |P_n(x)| \le \frac{2}{\sqrt{\pi(2n+1)}}, \quad x\in[-1,1],
\ee  
(the refined version \eqref{eq:Bernstein} was proved in \cite{ah}, see also \cite{lo1}). The constant $\sqrt{2/\pi}$ in \eqref{eq:Bernstein} is sharp. Moreover (see \cite[Theorem 12.1.6]{sz}), the following expression 
\be\label{eq:szego}
(1-x^2)^{1/4} \sqrt{w^{(\alpha,\beta)}(x)}\,\p_n^{(\alpha,\beta)}(x) 
\ee
asymptotically equioscillates between $-\sqrt{2/\pi}$ and $\sqrt{2/\pi}$ when $n$ tends to infinity (the latter holds for a wider class of orthonormal polynomials) and hence a lot of effort has been put in proving the estimates for \eqref{eq:Bern_ab} with $a=\frac{\alpha}{2}+\frac{1}{4}$ and $b=\frac{\beta}{2}+\frac{1}{4}$. Thus, for ultraspherical polynomials the corresponding estimates can be found in \cite[Theorem 7.33.2]{sz} (the case $\lambda\in (0,1)$, see also \cite{lo2} for a refinement), \cite{loh} (the case $\lambda>0$) and  \cite{foe} (the case $\lambda\ge 1$). In the nonsymmetric case, let us mention \cite{cgw}, \cite{emn} and the recent papers \cite{haa}, \cite{kra07}, \cite{kra08}. 
Let us also mention that it was conjectured by Erd\'elyi, Magnus and Nevai \cite{emn} that 
\be\label{eq:emn_con}
\max_{x\in(-1,1)} (1-x^2)^{1/4} \sqrt{w^{(\alpha,\beta)}(x)}\,
|\p_n^{(\alpha,\beta)}(x)| \le C \max(1,(|\alpha|+|\beta|)^{1/4})
\ee
for all $n\in\N_0$ and $\alpha$, $\beta\ge -1/2$. Notice that a weaker bound $\OO(\max(1,(\alpha^2+\beta^2)^{1/4}))$ was proved in \cite[Theorem 1]{emn}. On the other hand, the Erd\'elyi--Magnus--Nevai conjecture \eqref{eq:emn_con} was confirmed for all $n\in\N_0$ and $\alpha$, $\beta\in (-1/2,1/2)$ in \cite{cgw} (with a sharp estimate of the error term, see also \cite{ga}) and for all $n\ge 6$ and $\alpha$, $\beta\ge (1+\sqrt{2})/4$ in \cite{kra07}, \cite{kra08} (see also \cite{haa}).

The estimates for \eqref{eq:Bern_ab} with $a\neq\frac{\alpha}{2}+\frac{1}{4}$ and $b\neq\frac{\beta}{2}+\frac{1}{4}$ are much less studied, however, they are important in many applications. Let us mention only a few of them. First of all, ultraspherical polynomials arise in quantum mechanics as spherical harmonics. More precisely, the $L^2$ normalized spherical harmonics, which are eigenfunctions of the Laplace--Beltrami operator on the sphere $\mathbb{S}^2$, are given by  (cf. \cite[(4.7.35)]{sz})
\be\label{eq:harmonics}
\begin{split}
Y_l^m(\theta,\varphi) :=  \frac{(-1)^m}{\sqrt{2\pi}}\,\E^{\I m\varphi}
\sin^m(\theta)\,\p_{l-m}^{(m+1/2)}(\cos(\theta)),
\end{split}
\ee
if $m\in \{0,\dots, l\}$. 
Therefore, \eqref{eq:Bern_ab} provides uniform weighted $L^\infty$ estimates on eigenfunctions of the Laplace--Beltrami operator on $\mathbb{S}^2$. In particular, the following inequality was established in \cite[Theorem 1]{bdwz}:
\be\label{eq:burqest}
\max_{x\in (-1,1)} |x|^{1/6}(1-x^2)^{m/2+1/6} \big|\p_n^{(m+1/2)}(x)\big|\le C\, (n+m+1)^{1/6},
\ee
with some $C>0$, which does not depend on $n$, $m\in\N_0$. Moreover, \eqref{eq:burqest} and Krasikov's estimates \cite{kra08} were employed in \cite{bdwz} and \cite{rw}, respectively, in order to obtain bounds on the number of samples necessary for recovering sparse eigenfunction expansions on surfaces of revolution. 

The next example is also widely known. More precisely, Jacobi polynomials appear as coefficients of the so-called Wigner $d$-matrix 
(see Theorem \ref{thm:d-matrix}). Thus Bernstein-type estimates imply uniform bounds on a complete set of matrix coefficients for irreducible representations of ${\rm SU}(2)$
(see \cite{haa} and Section \ref{sec:II} below).  Furthermore, these inequalities play a very important role in the study of simple Lie groups. Namely, the Bernstein inequality and the Haagerup--Schlichtkrull inequality (see \eqref{eq:g_unif3} below)  were used in \cite{ldl} and \cite{hdl}, \cite{hdl2}, respectively, to study the approximation property of Haagerup and Kraus \cite{hk} for connected simple Lie groups. 

Finally, our interest in the estimates of the type \eqref{eq:Bern_ab} comes from the so-called dispersive estimates for discrete Laguerre operators\be\label{eq:H0}
H_\alpha := \begin{pmatrix} 
1+\alpha & \sqrt{1+\alpha} & 0 &  \cdots \\[1mm]
\sqrt{1+\alpha} & 3 + \alpha & \sqrt{2(2+\alpha)}   & \cdots \\[1mm]
0 & \sqrt{2(2+\alpha)} & 5 + \alpha  &  \cdots \\[1mm]
\vdots&\vdots&\vdots&\ddots
\end{pmatrix},\quad \alpha>-1,
\ee
acting in $\ell^2(\N_0)$. Explicitly, $H_\alpha = \big(h^{(\alpha)}_{n,m}\big)_{n,m\in\N_0}$ with $h^{(\alpha)}_{n,m}=0$ if $|n-m|>1$ and 
\begin{align*}
h^{(\alpha)}_{n,n} = 2n+1 + \alpha,\quad  h^{(\alpha)}_{n,n+1} = h_{n+1,n}^{(\alpha)}= \sqrt{(n+1)(n+1+\alpha)},\quad n\in \N_0.
\end{align*}
It is a special case of a self-adjoint Jacobi operator whose generalized eigenfunctions are precisely the Laguerre polynomials $L_n^{(\alpha)}$,
explaining our name for \eqref{eq:H0}.

The operator $H_\alpha$ features prominently in the recent study of nonlinear waves in $(2+1)$-dimensional noncommutative scalar field theory \cite{a06, a13, gms}. The coefficient $\alpha$ in \eqref{eq:H0} can be seen as a measure of the delocalization of the field configuration and it is related to the planar angular momentum \cite{a13}. In particular, $\alpha=0$ corresponds to spherically symmetric waves and it has attracted further interest in \cite{cfw03,ks15a,ks15b,ks15c}. 
 As this operator appears as the linear part in the nonlinear Klein--Gordon equation  \cite{cfw03} and the nonlinear Schr\"odinger equation
\be\label{eq:nls}
\I \dot \psi(t,n) = H_0 \psi(t,n) - |\psi(t,n)|^{2\sigma}\psi(t,n),\quad \sigma\in\N,\quad (t,n)\in \R_+\times \N_0,
\ee
investigated in the recent work of Krueger and Soffer \cite{ks15a,ks15b,ks15c}, dispersive estimates play a crucial role in the understanding of stability of the soliton manifolds appearing in these models (for further details see \cite{cfw03,gms,ks15a,ks15b,ks15c}). It turns out that the required dispersive decay estimates for the evolution group $\E^{-\I tH_\alpha}$ lead to Bernstein-type estimates for \eqref{eq:Bern_ab} (see \cite{kt} and Sections \ref{sec:V}--\ref{sec:VII} below).  
All these connections are mathematically very appealing and we hope that this note will stimulate further research in this direction.

In conclusion let us briefly outline the content of the paper. In the next section we introduce discrete Laguerre operators and briefly review their spectral properties. In Section \ref{sec:IV}, we present a connection between discrete Laguerre operators $H_\alpha$ and Jacobi polynomials. More precisely, we show that the kernel of the evolution group $\E^{\I tH_\alpha}$ can be expressed by means of Jacobi polynomials (Theorem \ref{thm:explicit}). This result establishes a connection between uniform estimates for \eqref{eq:Bern_ab} and dispersive estimates for the evolution group $\E^{\I tH_\alpha}$. In Section \ref{sec:II}, we review the connection between irreducible representations of $\SU(2)$ and Jacobi polynomials. The latter, in particular, implies the estimates for \eqref{eq:Bern_ab} with $a=\frac{\alpha}{2}$ and $b=\frac{\beta}{2}$ when $\alpha$, $\beta\in\N_0$ (see \eqref{eq:g_unif1} and \eqref{eq:g_unif2}). In Section \ref{sec:new}, we prove the following Bernstein-type estimate 
\be\label{eq:conj01}
\left(\frac{1+x}{2}\right)^{{\beta}/{2}} \Big|P_n^{(\alpha,\beta)}\left(x\right)\Big| \le \binom{n+\alpha}{n},\quad x\in [-1,1],
\ee
if $\beta\ge 0$ and $\alpha\ge \beta - \floor{\beta}$. 

Finally, Bernstein-type inequalities enable us to prove the decay estimates for the evolution group $\E^{-\I tH_\alpha}$, 
which we discuss in Section \ref{sec:V}.  First of all, using the known Bernstein-type inequalities, we prove the decay estimates of order $\OO(t^{-1})$ (Theorem~\ref{thm:decay1}) and $\OO(t^{-1/2})$  if $\alpha\ge 0$, however, with a better behavior of weights~$\sigma$
(Theorem~\ref{thm:decay2}). On the other hand, the new inequality \eqref{eq:conj01} enables us to show that 
 \be\label{eq:I_decay}
 \|\E^{- \I tH_\alpha}\|_{\ell^1(\sigma_\alpha)\to \ell^\infty(\sigma_{\alpha}^{-1})} =\left(\frac{1}{1+t^2}\right)^{\frac{1+\alpha}{2}}, \quad t\in\R,
 \ee
for all $\alpha\ge 0$, with the weights $\sigma_\alpha$ given by $\sigma_\alpha=\{\binom{n+\alpha}{n}^{1/2}\}_{n\ge 0}$ (see Theorem \ref{thm:decay_a}). We finish our paper with some further comments on new Bernstein-type inequalities and certain parallels between dispersive estimates for discrete Laguerre operators and one-dimensional spherical Schr\"odinger operators (see Section~\ref{sec:VII}).

\subsection*{Notation}
$\R$ and $\C$ have the usual meaning. Also write $\R_+ := (0,\infty)$,
$\N:=\{1,2,\ldots\}$ and $\N_0:=\N\cup\{0\}$. By
$\Gamma$ is denoted the classical gamma function \dlmf{5.2.1}.
For $x\in\C$ and $n\in\N_0$
\be\label{K6}
(x)_n:=\begin{cases} x(x+1)\cdots(x+n-1),& n\in\N\\ 1, & n=0 \end{cases};\qquad
\binom{n+x}{n} := \frac{(x+1)_n}{n!}
\ee
denote the {\em Pochhammer symbol} \dlmf{5.2.4}
and the {\em binomial coefficient}, respectively. Notice that for $-x\notin \N_0$
\[
(x)_n = \frac{\Gamma(x+n)}{\Gamma(x)},\qquad 
\binom{n+x}{n} = \frac{\Gamma(x+n+1)}{\Gamma(x+1)\Gamma(n+1)}\,.
\]
Moreover, the above formulas allow to define the Pochhammer symbol and the binomial coefficient for noninteger $x$, $n>0$. 
Finally, for $-c \notin \N_0$
the
{\em Gauss hypergeometric function} \dlmf{15.2.1} is defined by 
\be\label{K7}
\hyp21{a,b}{c}{z} := \sum_{k=0}^\infty \frac{(a)_k(b)_k}{(c)_k k!}\,z^k
\quad\mbox{($|z|<1$ or else $-a$ or $-b\in \N_0$).}
\ee

\section{Spectral properties of the discrete Laguerre operators}\label{sec:III} 

We start with a precise definition of the operator $H_\alpha$ associated with the Jacobi matrix \eqref{eq:H0}. For a sequence $u=\{u_n\}_{n\ge 0}$ we define the difference expression $\tau_\alpha\colon u\mapsto \tau_\alpha u$ by setting
\be\label{eq:tau}
(\tau_\alpha u)_n : = \sqrt{n(n+\alpha)}\,u_{n-1} + (2n+1 +\alpha)u_n +
\sqrt{(n+1)(n+1+\alpha)}\,u_{n+1},
\ee
where $u_{-1}:=0$ for notational convenience.
Then the operator $H_\alpha$ associated with the Jacobi matrix \eqref{eq:H0} is defined by
\begin{align}\begin{split}
H_\alpha\colon \begin{array}[t]{lcl} \mathcal{D}_{\max} &\to& \ell^2(\N_0) \\ u &\mapsto& \tau_\alpha u\  \end{array},
\end{split}\end{align}
where $ \mathcal{D}_{\max} = \{u\in \ell^2(\N_0)|\, \tau_\alpha u\in \ell^2(\N_0)\}$. 
The spectral properties of $H_\alpha$ are well known.
For the sake of completeness
we collect them in the following theorem and give a short proof.
\begin{theorem}\label{thm:spH_0}
Let $\alpha>-1$. Then:
 \begin{enumerate}[label=(\roman*), ref=(\roman*), leftmargin=*, widest=iii] 
\item The operator $H_\alpha$ is a positive self-adjoint operator.
\item The spectrum of $H_\alpha$ is purely absolutely continuous and coincides with $[0,\infty)$.
\item The Weyl function and the corresponding spectral measure are given by
\begin{align}\nn
m_\alpha(z)  &= \frac{1}{\Gamma(\alpha+1)}\int_0^{+\infty} \frac{\E^{-\lambda}\lambda^\alpha}{\lambda-z}d\lambda=
\E^{-z} { E}_{1+\alpha}(-z) ,\quad z\in\C\setminus [0,\infty),\\
d\rho_\alpha(\lambda) &= \frac{\id_{\R_+}(\lambda)}{\Gamma(\alpha+1)}\E^{-\lambda}\lambda^\alpha d\lambda,\quad \lambda\in\R,
\end{align}
where $E_p(z) := z^{p-1} \int_z^\infty \E^{-t} t^{-p} dt$ denotes the principal value of the generalized exponential integral \dlmf{8.19.2}.
\end{enumerate}
\end{theorem}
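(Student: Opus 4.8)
The plan is to recognize $H_\alpha$ as the Jacobi matrix attached to the Laguerre polynomials and then read off all spectral data from their classical orthogonality relations. Let $\ell_n^{(\alpha)}$ denote the orthonormal Laguerre polynomials, i.e.\ those orthonormal with respect to the (probability) measure $d\rho_\alpha(\lambda)=\frac{1}{\Gamma(\alpha+1)}\id_{\R_+}(\lambda)\E^{-\lambda}\lambda^\alpha d\lambda$. First I would observe that the symmetric three-term recurrence for the $\ell_n^{(\alpha)}$ is \emph{precisely} the formal eigenvalue equation $\tau_\alpha u=\lambda u$: the sequence $\{\ell_n^{(\alpha)}(\lambda)\}_{n\ge0}$ solves $(\tau_\alpha u)_n=\lambda u_n$ because the off-diagonal coefficients $\sqrt{(n+1)(n+1+\alpha)}$ and the diagonal coefficients $2n+1+\alpha$ in \eqref{eq:tau} are exactly the recurrence coefficients of the Laguerre polynomials in their orthonormal normalization. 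Thus the spectral problem for $H_\alpha$ is the Laguerre moment problem.

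Next I would settle self-adjointness. Since the off-diagonal entries satisfy $\sqrt{(n+1)(n+1+\alpha)}\sim n$, Carleman's condition $\sum_n \big(\sqrt{(n+1)(n+1+\alpha)}\big)^{-1}=\infty$ holds, so $\tau_\alpha$ is in the limit point case at $+\infty$; by the standard theory of Jacobi operators the minimal operator (the restriction of $\tau_\alpha$ to finitely supported sequences) is essentially self-adjoint and its closure coincides with the maximal operator $H_\alpha$ on $\mathcal D_{\max}$. Equivalently, the associated Hamburger moment problem is determinate. Determinacy is exactly what guarantees that the map $U\colon\ell^2(\N_0)\to L^2(\R,d\rho_\alpha)$, $\delta_n\mapsto \ell_n^{(\alpha)}$, extends to a \emph{unitary} operator (completeness of the Laguerre polynomials in $L^2(d\rho_\alpha)$) which intertwines $H_\alpha$ with multiplication by $\lambda$. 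From this spectral representation (ii) is immediate: the spectrum equals $\supp(\rho_\alpha)=[0,\infty)$ and is purely absolutely continuous because $d\rho_\alpha$ is. Likewise (i) follows, since $\inner{u}{H_\alpha u}=\int_0^\infty\lambda\,|(Uu)(\lambda)|^2\,d\rho_\alpha(\lambda)\ge0$.

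For (iii) I would use that $\delta_0$ is a cyclic vector for the tridiagonal operator $H_\alpha$ and that, under the unitary $U$, its associated spectral measure is exactly $d\rho_\alpha$; hence the Weyl function is the Stieltjes transform $m_\alpha(z)=\inner{\delta_0}{(H_\alpha-z)^{-1}\delta_0}=\int_0^\infty\frac{d\rho_\alpha(\lambda)}{\lambda-z}$, which is the first displayed integral. To evaluate it, for $\re z<0$ I would write $\frac{1}{\lambda-z}=\int_0^\infty \E^{-s(\lambda-z)}\,ds$, interchange the order of integration (justified by absolute convergence), and carry out the resulting Gamma integral to obtain
\[
m_\alpha(z)=\int_0^\infty \frac{\E^{zs}}{(1+s)^{\alpha+1}}\,ds
=\E^{-z}\int_1^\infty \frac{\E^{zu}}{u^{\alpha+1}}\,du .
\]
The substitution $t=-zu$ in the definition $E_{1+\alpha}(w)=w^{\alpha}\int_w^\infty \E^{-t}t^{-1-\alpha}\,dt$ (with $w=-z$) then identifies the last integral with $E_{1+\alpha}(-z)$, the powers of $(-z)$ cancelling so that no branch ambiguity remains; since both sides are holomorphic on $\C\setminus[0,\infty)$, the identity $m_\alpha(z)=\E^{-z}E_{1+\alpha}(-z)$ extends there by analytic continuation.

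I expect the main obstacle to be conceptual rather than computational: one must correctly invoke determinacy of the Laguerre moment problem to be sure that the spectral measure of $H_\alpha$ is \emph{precisely} $d\rho_\alpha$, and equivalently that $H_\alpha$ is genuinely self-adjoint (not merely symmetric with a family of self-adjoint extensions). The growth $\sqrt{(n+1)(n+1+\alpha)}\sim n$, via Carleman, is exactly what makes this work. By contrast, the special-function identity in (iii) is routine once the interchange of integrals is justified and the branch of $(-z)^\alpha$ off the positive real axis is handled with care.
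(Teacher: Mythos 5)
Your proposal is correct and follows essentially the same route as the paper: Carleman's test for self-adjointness (limit point case / determinacy), identification of the Laguerre polynomials as the polynomials of the first kind, the orthogonality relations to pin down the spectral measure, and the Weyl function as the Stieltjes transform of $d\rho_\alpha$. The only slip is that the orthonormal Laguerre polynomials satisfy the three-term recurrence with \emph{negative} off-diagonal coefficients $-\sqrt{(n+1)(n+1+\alpha)}$, so the first-kind polynomials for $H_\alpha$ are $(-1)^n\binom{n+\alpha}{n}^{-1/2}L_n^{(\alpha)}$ rather than the orthonormal Laguerre polynomials themselves; this diagonal sign change is unitary and affects none of the spectral conclusions.
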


\begin{proof}
(i) Self-adjointness clearly follows from the Carleman test (see, e.g., \cite{akh}, \cite[(2.165)]{tjac}).
Nonnegativity as well as item (ii) immediately follow from (iii), so let us prove (iii).
Notice that the orthogonal polynomials for $H_\alpha$ are given by
\be
P_{\alpha,n}(z) = \frac{(-1)^{n}}{\sigma_{\alpha}(n)}L_n^{(\alpha)}(z),\quad n\in \N_0,
\ee
where 
\be\label{eq:sigma_a}
\sigma_{\alpha}(n) = \sqrt{L_n^{(\alpha)}(1)} = \binom{n+\alpha}{n}^{1/2},
\ee
and $L_n^{(\alpha)}$ are the Laguerre polynomials \cite[Section 5.1]{sz}
\be\label{eq:laguerpol}
\frac{L_n^{(\alpha)}(z)}{L_n^{(\alpha)}(1)}
=\hyp11{-n}{\alpha+1}z
=\sum_{k=0}^n\frac{(-n)_k}{(\alpha+1)_k\,k!}\,z^k.
\ee
The recurrence formula for the Laguerre polynomials \cite[(5.1.10)]{sz} implies that 
$u:=\{P_{\alpha,n}(z)\}_{n\in\N_0}$ satisfies $(\tau_\alpha u)_n = zu_n$ for all $n\ge 0$. 
Furthermore, the polynomials $L_n^{(\alpha)}$
satisfy the orthogonality relations \cite[(5.1.1)]{sz}
\be\label{eq:lagorth}
\frac1{\Gamma(\alpha+1)}
\int_{0}^{\infty} L_n^{(\alpha)}(\lambda) L_k^{(\alpha)}(\lambda)
\E^{-\lambda}\lambda^\alpha\, d\lambda = \binom{n+\alpha}{n}\,
\delta_{nk},\quad n,k\in\N_0.
\ee
Therefore, \eqref{eq:lagorth} and (i) imply that $d\rho_\alpha$ is the spectral measure of $H_\alpha$, that is, $H_\alpha$ is unitarily equivalent to a multiplication operator in $L^2(\R_+,d\rho_\alpha)$. 
It remains to note that the corresponding Weyl function is the Stieltjes transform of the measure $d\rho_\alpha$ (cf.\ e.g.\ \cite[Chapter~2]{tjac}). 
\end{proof}



\begin{remark}\label{rm:sl(2)_1}
The operator $H_\alpha$, when restricted to $\ell^2_c(\N_0)$, 
can be seen as occurring in a discrete series representation of the Lie algebra $\slt$.
First define operators $A,X,Y$ on this linear span by
\[
Au_n:=(2n+\alpha+1)u_n,\;
Xu_n:=\sqrt{(n+1)(n+\alpha+1)}\,u_{n+1},\;
Yu_n:=\sqrt{n(n+\alpha)}\,u_{n-1}.
\]
They satisfy the commutator relations
\[
[A,X]=2X,\quad
[A,Y]=-2Y,\quad
[X,Y]=-A.
\]
Now consider the skew-hermitian operators
\[
J_0:=X-Y,\quad
J_+:=\thalf \I(-A+X+Y),\quad
J_-:=\thalf \I(A+X+Y)=\thalf\I H_\alpha.
\]
They form an $\slt$ triple:
\[
[J_0,J_+]=2J_+,\quad
[J_0,J_-]=-2J_-,\quad
[J_+,J_-]=J_0.
\]
Thus we have a representation of the Lie algebra $\slt$:
\[
\begin{pmatrix}1&0\\0&-1\end{pmatrix}\to J_0,\quad
\begin{pmatrix}0&1\\0&0\end{pmatrix}\to J_+,\quad
\begin{pmatrix}0&0\\1&0\end{pmatrix}\to J_-.
\]
In particular, $\big(\begin{smallmatrix}0&-1\\1&0\end{smallmatrix}\big)$, which spans the
Lie subalgebra of the subgroup $K:=$SO(2) of $\slt$, is mapped in this
representation to $\I A$. If we compare with \cite[Section 7]{ko88},
which builds on \cite[Section 3]{bw}, we see that this representation,
when exponentiated to a unitary representation of
the Lie group $\SLt$, is a so-called discrete series representation $D_{\half(\alpha+1)}^+$ of $\SLt$ for $\alpha\in\N_0$, and otherwise,
for real $\alpha>-\thalf$, a similar representation of the universal
covering group of $\SLt$ (see \cite{sa}).
\end{remark}

\section{The evolution group $\E^{-\I tH_\alpha}$}\label{sec:IV}
In this and the following sections we look at the one-dimensional discrete Schr\"odinger equation
\begin{equation} \label{Schr}
  \I \dot \psi(t,n) = H_\alpha \psi(t,n), \quad  
  (t,x)\in\R\times \N_0,
\end{equation}
associated with the Laguerre operator $H_\alpha$ defined in the previous section. 
We begin by establishing a connection between the discrete Laguerre operators and Jacobi polynomials, which follows from the fact that the Laplace transform of a product of two Laguerre polynomials is expressed by means of a terminating Gauss hypergeometric series.

\begin{theorem}\label{thm:explicit}
Let $\alpha>-1$. The kernel\footnote{In analogy with the (integral) kernel of an integral operator we speak
about the (summation) kernel of a summation operator acting by a matrix.} of the operator $\E^{-\I tH_\alpha}$ is given by 
\begin{multline}\label{eq:explicit}
\E^{-\I tH_\alpha}(n,m)=\E^{-\I tH_\alpha}(m,n)\\
= \frac {1}{(1+\I t)^{1+\alpha}}\left(\frac{t+\I}{t-\I}\right)^n\left(\frac{t}{t-\I}\right)^{m-n}\,
\frac{\sigma_\alpha(m)}{\sigma_\alpha(n)}\ 
P_n^{(\alpha,m-n)}\left( \frac{t^2-1}{t^2+1}\right)
\end{multline} 
for all $n$, $m\in\N_0$.
\end{theorem}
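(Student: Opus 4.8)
The plan is to pass to the spectral representation of $H_\alpha$ furnished by Theorem~\ref{thm:spH_0} and thereby turn the matrix element into an integral of Laguerre polynomials against the measure $d\rho_\alpha$. Since $H_\alpha$ is unitarily equivalent to multiplication by $\lambda$ on $L^2(\R_+,d\rho_\alpha)$ under the map sending $\delta_n$ to the orthonormal polynomial of the first kind $P_{\alpha,n}$, I would write
\begin{equation*}
\E^{-\I tH_\alpha}(n,m)=\big\langle \E^{-\I tH_\alpha}\delta_n,\delta_m\big\rangle
=\int_0^\infty \E^{-\I t\lambda}\,P_{\alpha,n}(\lambda)\,P_{\alpha,m}(\lambda)\,d\rho_\alpha(\lambda).
\end{equation*}
The claimed symmetry $\E^{-\I tH_\alpha}(n,m)=\E^{-\I tH_\alpha}(m,n)$ is then immediate. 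Inserting $P_{\alpha,n}=(-1)^n\binom{n+\alpha}{n}^{-1/2}L_n^{(\alpha)}$ together with the explicit $d\rho_\alpha$ of Theorem~\ref{thm:spH_0}, and combining $\E^{-\I t\lambda}\E^{-\lambda}=\E^{-s\lambda}$ with $s:=1+\I t$ (so that $\re s=1>0$), reduces the whole statement to evaluating the Laplace transform of a product of two Laguerre polynomials,
\begin{equation*}
I_{n,m}(s):=\int_0^\infty \E^{-s\lambda}\,\lambda^\alpha\,L_n^{(\alpha)}(\lambda)\,L_m^{(\alpha)}(\lambda)\,d\lambda .
\end{equation*}

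The heart of the argument is the evaluation of $I_{n,m}(s)$. I would compute it through the generating function $\sum_{n\ge0}L_n^{(\alpha)}(\lambda)\,r^n=(1-r)^{-\alpha-1}\E^{-\lambda r/(1-r)}$: multiplying the generating functions in two variables $r,q$, interchanging sum and integral (justified by absolute convergence for $|r|,|q|$ small), and using $\int_0^\infty\E^{-A\lambda}\lambda^\alpha\,d\lambda=\Gamma(\alpha+1)A^{-\alpha-1}$ yields the closed bivariate generating function
\begin{equation*}
\sum_{n,m\ge0}I_{n,m}(s)\,r^n q^m=\frac{\Gamma(\alpha+1)}{D^{\alpha+1}},\qquad
D=s-(s-1)(r+q)+(s-2)\,rq .
\end{equation*}
Extracting the coefficient of $r^n q^m$ (say for $m\ge n$, the opposite case following from $I_{n,m}=I_{m,n}$) and simplifying the product of binomials that appears leaves a terminating sum of $\min(n,m)+1$ terms, which I would recognize via \eqref{K7} as the Gauss series $\hyp21{-n,\,m+\alpha+1}{m-n+1}{u^2/v}$, where $u:=(s-1)/s$ and $v:=(s-2)/s$.

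To finish, I would identify this Gauss series with a Jacobi polynomial. Using the companion form of \eqref{eq:01} obtained from the symmetry \eqref{eq:symmetry} (which replaces the lower parameter $\alpha+1$ by $\beta+1=m-n+1$ and the argument by $(1+x)/2$), together with the elementary identity $u^2/v=(1+x)/2$ for $x=(t^2-1)/(t^2+1)$, turns the series into $P_n^{(\alpha,m-n)}\big((t^2-1)/(t^2+1)\big)$ up to explicit factors. It then remains to collect all elementary prefactors: the power $s^{-\alpha-1}=(1+\I t)^{-1-\alpha}$, the binomial normalizations $\binom{n+\alpha}{n}^{-1/2}\binom{m+\alpha}{m}^{-1/2}$ (which produce the square root $\sqrt{(\alpha+1)_m\,n!/((\alpha+1)_n\,m!)}$), and the sign $(-1)^{n+m}$ from the first-kind convention, to be combined with the powers of $u$ and $v$; using $1+\I t=\I(t-\I)$ and $\I t-1=\I(t+\I)$ one checks that $u=\tfrac{t}{t-\I}$ and $v=\tfrac{t+\I}{t-\I}$, so that the whole prefactor assembles into that of \eqref{eq:explicit}. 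I expect the genuinely delicate point to be precisely this bookkeeping — extracting the coefficient of $r^nq^m$ from $D^{-\alpha-1}$ in closed form and then tracking every sign and half-integer power so that they assemble correctly. A shortcut would be to quote $I_{n,m}(s)$ directly from classical tables of integral transforms, bypassing the generating-function computation altogether.
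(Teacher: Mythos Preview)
Your proposal is correct and follows the same overall strategy as the paper: pass to the spectral side, reduce the matrix element to the Laplace transform $I_{n,m}(s)=\int_0^\infty \E^{-s\lambda}\lambda^\alpha L_n^{(\alpha)}(\lambda)L_m^{(\alpha)}(\lambda)\,d\lambda$, evaluate it as a terminating ${}_2F_1$, and then recognise a Jacobi polynomial. The differences are only in the tools used at each step. You reach the integral directly from the unitary equivalence of Theorem~\ref{thm:spH_0}, while the paper goes through Stone's formula and the boundary values of the resolvent; both arrive at \eqref{PP}. For the central evaluation the paper computes $I_{n,m}$ by a twofold application of the connection formula \dlmf{18.18.12} together with Laguerre orthogonality (and, as you anticipate, also cites the Erd\'elyi tables as an alternative), obtaining the symmetric form ${}_2F_1\big(-n,-m;\alpha+1;(s-1)^{-2}\big)$, which it then matches to \eqref{eq:01} via Euler's transformation. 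Your bivariate generating-function extraction is an equally valid and self-contained alternative that naturally produces the companion series ${}_2F_1\big(-n,m+\alpha+1;m-n+1;u^2/v\big)$, so that the symmetry \eqref{eq:symmetry} replaces Euler's transformation in the final identification. Your caution about the bookkeeping is well placed; the paper's route through the symmetric ${}_2F_1$ keeps that part a little lighter, at the price of importing the DLMF identity.
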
 

\begin{proof}
Similar to the case $\alpha=0$ (see \cite{kt}), one gets by employing Stone's formula (cf., e.g. \cite[\S 4.1]{tschroe})
\begin{multline}\label{PP}
\E^{-\I tH_\alpha}(n,m)
	=\frac{(-1)^{n+m}}{\sigma_\alpha(n)\sigma_\alpha(m)\Gamma(\alpha+1)}
\int_{0}^{\infty}\E^{-\I t \lambda} L_n^{(\alpha)}(\lambda) L_m^{(\alpha)}(\lambda) \E^{-\lambda}\lambda^\alpha\,d\lambda 
\end{multline}
for all $n$, $m\in\N_0$. 
It follows from \eqref{PP} that every element of the kernel of the operator $\E^{-\I tH_\alpha}$ is the Laplace transform of a product of two Laguerre polynomials. 
Then using \cite[(4.11.35)]{erd} and \dlmf{15.8.7} together with Euler's transformation \dlmf{15.8.1},
after lengthy but straightforward calculations one arrives at \eqref{eq:explicit}.
\end{proof}

\begin{remark}\label{rm:Meixner}
It is interesting to mention that the unitarity of  $\E^{-\I tH_\alpha}$ is equivalent to the
orthogonality relations for the Meixner polynomials \dlmf{18.20.7}
\be\label{K11}
M_n(x;\beta,c):=\hyp21{-n,-x}\beta{1-c^{-1}}.
\ee
Namely, 
\be\label{K4}
M_n(x;\beta,c)=\frac{n!}{c^n(\beta)_n}\,P_n^{(\beta-1,x-n)}(2c-1),
\ee
and then equation \eqref{eq:explicit} reads
\be\label{K2}
\frac{\E^{-\I tH_\alpha}(n,m)}{{\sigma_\alpha(m)}{\sigma_\alpha(n)}}=
\frac {1}{(1+\I t)^{1+\alpha}}\left(\frac{-\I t}{1+\I t}\right)^{n+m}
\  M_n\left(m;\alpha+1,\frac{t^2}{1+t^2}\right).
\ee
It remains to note that the orthogonality relations are \cite[Table \href{http://dlmf.nist.gov/18.19.T1}{18.19.1}]{dlmf} (with positive weights if $\beta>0$ and $0<c<1$)
\be\label{K3}
(1-c)^\beta\sum_{x=0}^\infty \frac{(\beta)_x c^x}{x!}\,
M_n(x;\beta,c)M_k(x;\beta,c)=
\frac{n!}{(\beta)_n c^n}\,\delta_{nk}.
\ee
\end{remark}

\begin{remark}
We continue Remark \ref{rm:sl(2)_1} and assume, for convenience, that
$\alpha\in\N_0$, so that we can refer to \cite[Section 7]{ko88}.
In the realization of the discrete series representation given there,
a $K$-basis \cite[(7.16)]{ko88} is given in terms of Laguerre polynomials
and the $K$-$K$ matrix elements \cite[(7.20)]{ko88} are in terms of Meixner
polynomials. This provides a further explanation of the observations in
Remark \ref{rm:Meixner}.
\end{remark}

The next result provides recurrence relations for the kernel of
$\E^{\I tH_\alpha}$.

\begin{corollary}\label{cor:recur}
Let $\alpha>-1$ and $n\le m$. Then 
\begin{align}
\E^{-\I tH_\alpha}(n+1,m+1) =
&\sqrt{\frac{(m+1)(m+1+\alpha)}{(n+1)(n+1+\alpha)}}\,
\frac{\I+t}{\I - t}\,\E^{-\I tH_\alpha}(n,m)\nn \\
&\quad +\frac{n+m+\alpha+2}{\sqrt{(n+1)(n+1+\alpha)}}\,
\frac{t}{\I - t}\,\E^{-\I tH_\alpha}(n,m+1)\\
 =& \frac{n+m+\alpha+2}{\sqrt{(n+1)(m+1)}}\,
 \frac{1}{1+\I t}\,\E^{-\I tH_{\alpha+1}}(n,m)\nn \\
&\quad+\sqrt{\frac{(n+\alpha+1)(m+1+\alpha)}{(n+1)(m+1)}}\,
\frac{\I+t}{\I - t}\,\E^{-\I tH_\alpha}(n,m).
\end{align}
\end{corollary}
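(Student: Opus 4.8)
The plan is to insert the closed form of Theorem~\ref{thm:explicit} into both identities and reduce each to a contiguous relation for Jacobi polynomials. Throughout set $\beta:=m-n\ge 0$ and $z:=\frac{t^2-1}{t^2+1}$, so that $\frac{1-z}{2}=\frac{1}{1+t^2}$ and $\frac{1+z}{2}=\frac{t^2}{1+t^2}$. By \eqref{eq:explicit} every kernel entry occurring in the corollary factors as a product of the common prefactor $C:=(1+\I t)^{-(1+\alpha)}\big(\frac{t+\I}{t-\I}\big)^{n}\big(\frac{t}{t-\I}\big)^{m-n}$, a square-root normalization built from Pochhammer symbols and factorials, and a single Jacobi polynomial evaluated at $z$.

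First I would record how the three elementary index shifts act. Passing from $(n,m)$ to $(n+1,m+1)$ multiplies $C$ by $\frac{t+\I}{t-\I}$ and the normalization by $\sqrt{\frac{(n+1)(\alpha+m+1)}{(m+1)(\alpha+n+1)}}$, leaving the parameter pair $(\alpha,\beta)$ unchanged but raising the degree to $n+1$; passing to $(n,m+1)$ multiplies $C$ by $\frac{t}{t-\I}$ and the normalization by $\sqrt{\frac{\alpha+m+1}{m+1}}$, and raises the second parameter to $\beta+1$; and passing from $H_\alpha$ to $H_{\alpha+1}$ multiplies $C$ by $(1+\I t)^{-1}$ and the normalization by $\sqrt{\frac{\alpha+m+1}{\alpha+n+1}}$, and raises the first parameter to $\alpha+1$.

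Dividing each asserted identity by $C\,\frac{t+\I}{t-\I}$ and simplifying, the square-root factors combine to elementary rational expressions while the leftover $t$-factors collapse — using $(t-\I)(t+\I)=t^2+1$ and $t\pm\I=\pm\I(1\mp\I t)$ — precisely into $\frac{1+z}{2}$ for the first identity and $-\frac{1-z}{2}$ for the second. Both statements thereby become $t$-free linear relations, namely
\[
(n+1)P_{n+1}^{(\alpha,\beta)}(z)+(n+\beta+1)P_n^{(\alpha,\beta)}(z)=(2n+\alpha+\beta+2)\,\tfrac{1+z}{2}\,P_n^{(\alpha,\beta+1)}(z)
\]
for the first, and
\[
(n+1)P_{n+1}^{(\alpha,\beta)}(z)+(2n+\alpha+\beta+2)\,\tfrac{1-z}{2}\,P_n^{(\alpha+1,\beta)}(z)=(n+\alpha+1)P_n^{(\alpha,\beta)}(z)
\]
for the second. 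Since these hold for all $z$, it suffices to verify them as polynomial identities in $x$, which I would do from the terminating hypergeometric representation \eqref{eq:01} via Gauss's relations between contiguous ${}_2F_1$'s (they are classical Jacobi contiguous relations; cf.\ \cite{sz}); as a consistency check, both reduce correctly at $x=1$ using \eqref{eq:normal} and have matching leading coefficients.

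The main obstacle is the bookkeeping in the reduction: one must track the square-root normalizations under the three independent shifts and confirm that the accompanying powers of $\frac{t\pm\I}{t-\I}$ and $(1+\I t)^{-1}$ conspire to leave exactly $\frac{1\pm z}{2}$ with no residual $t$-dependence. Once that cancellation is verified, the corollary is precisely the pair of contiguous relations above.
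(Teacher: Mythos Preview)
Your proposal is correct and follows essentially the same route as the paper: both substitute the explicit kernel formula from Theorem~\ref{thm:explicit} and reduce the two asserted identities to the Jacobi contiguous relations \eqref{eq:rec01}--\eqref{eq:rec02} (which are precisely the two relations you record, rewritten). The paper simply cites these as \cite[(4.5.4)]{sz} and says ``straightforward calculations complete the proof,'' whereas you spell out the bookkeeping of how the prefactors transform under the three index shifts; the substance is identical.
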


\begin{proof}
Using the recurrence relations for Jacobi polynomials (see \cite[(4.5.4)]{sz}):
\begin{align*}
P_{n+1}^{(\alpha,\beta)}(x) &= \frac{n+\alpha+1}{n+1}P_{n}^{(\alpha,\beta)}(x) - \frac{2n+\alpha+\beta+2}{n+1}\frac{1-x}{2} P_{n}^{(\alpha+1,\beta)}(x)\\
&=  \frac{2n+\alpha+\beta+2}{n+1}\frac{1+x}{2} P_{n}^{(\alpha,\beta+1)}(x) - \frac{n+\beta+1}{n+1}P_{n}^{(\alpha,\beta)}(x),
\end{align*}
straightforward calculations complete the proof.
\end{proof}

We collect some special cases explicitly for later use.

\begin{corollary}\label{cor:cases}
 \begin{enumerate}[label=(\roman*), ref=(\roman*), leftmargin=*, widest=iii] 
\item In the case $n=0$ we have 
\be\label{eq:erd1}
\E^{-\I tH_\alpha}(0,m)  = 
\frac {1}{(1+\I t)^{1+\alpha}}\left(\frac{-\I t}{1+\I t}\right)^m 
\sqrt{\frac{(\alpha+1)_m}{m!}}
,\quad m\in\N_0.
\ee
\item In the case $n=1$ we have for $m\in\N$
\be\label{eq:erd2}
\E^{-\I tH_\alpha}(1,m)=
\frac {1}{(1+\I t)^{1+\alpha}}\left(\frac{-\I t}{1+\I t}\right)^{m+1}
\frac{(1+\alpha)t^2- m}{t^2}\,
\sqrt{\frac{(\alpha+2)_{m-1}}{m!}}\,.
\ee
\item
In the case $n=m$ we have
\be\label{eq:erd3}
\E^{-\I tH_\alpha}(m,m) = \frac {1}{(1+\I t)^{1+\alpha}}\left(\frac{t+\I}{t-\I}\right)^m P_m^{(\alpha,0)}\left( \frac{t^2-1}{t^2+1}\right), \quad m\in\N_0.
\ee
\end{enumerate}
\end{corollary}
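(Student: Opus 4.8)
The plan is to obtain all three formulas as specializations of the explicit kernel of Theorem~\ref{thm:explicit}: the diagonal case~(iii) directly from \eqref{eq:explicit}, and the two low-index cases~(i), $n=0$, and~(ii), $n=1$, most conveniently from the Meixner form~\eqref{K2}, in which the relevant Meixner polynomials have degree $0$ and $1$, respectively. Once the kernel formula is granted, the argument is purely computational: it only involves evaluating a low-degree orthogonal polynomial and simplifying Pochhammer symbols.

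For~(iii) I would set $n=m$ in \eqref{eq:explicit}. The exponent $m-n$ of the factor $\bigl(\tfrac{t}{t-\I}\bigr)^{m-n}$ becomes $0$, the square-root prefactor reduces to $\sqrt{\frac{(\alpha+1)_m\,m!}{(\alpha+1)_m\,m!}}=1$, and the Jacobi polynomial becomes $P_m^{(\alpha,0)}$, leaving precisely \eqref{eq:erd3}. For~(i) I would set $n=0$ in \eqref{K2}; since $M_0(m;\alpha+1,c)\equiv1$ and the prefactor collapses using $(\alpha+1)_0=1$ and $0!=1$, the formula reduces at once to \eqref{eq:erd1}.

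The one case that requires a short calculation is~(ii), $n=1$. I would put $n=1$ in \eqref{K2} and insert the degree-one Meixner polynomial $M_1(m;\alpha+1,c)=1+\tfrac{m}{\alpha+1}(1-c^{-1})$ evaluated at $c=\tfrac{t^2}{1+t^2}$, for which $1-c^{-1}=-t^{-2}$; this gives $M_1=\tfrac{(\alpha+1)t^2-m}{(\alpha+1)t^2}$. Simplifying the prefactor with $(\alpha+1)_1=\alpha+1$ and the identity $(\alpha+1)_m=(\alpha+1)(\alpha+2)_{m-1}$ then turns the square root into $\sqrt{(\alpha+2)_{m-1}/m!}$ and yields \eqref{eq:erd2}. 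Equivalently, one could work from \eqref{eq:explicit}, using the explicit degree-one Jacobi polynomial $P_1^{(\alpha,\beta)}(x)=\tfrac12\bigl((\alpha+\beta+2)x+(\alpha-\beta)\bigr)$ from \eqref{K5} with $\beta=m-1$ and $x=\tfrac{t^2-1}{t^2+1}$, which after clearing $t^2+1$ collapses to $\tfrac{(\alpha+1)t^2-m}{t^2+1}$; or one could derive~(ii) from~(i) through the first recurrence of Corollary~\ref{cor:recur} specialized to $n=0$.

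I expect the only genuinely delicate point to be the bookkeeping of the rational prefactors, in particular rewriting $\tfrac{t}{t-\I}=\tfrac{\I t}{1+\I t}$ and correctly carrying the sign from $(-1)^{n+m}$ in \eqref{PP}, so that the power of $\tfrac{-\I t}{1+\I t}$ in \eqref{eq:erd1}--\eqref{eq:erd2} comes out right. Using \eqref{K2} in cases~(i) and~(ii) sidesteps most of this, since that form already displays the normalized factor $\bigl(\tfrac{-\I t}{1+\I t}\bigr)^{n+m}$; the sign is in any case immaterial for~(iii) because there $m-n=0$.
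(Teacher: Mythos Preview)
Your proposal is correct and follows essentially the same route as the paper: specialize the explicit kernel of Theorem~\ref{thm:explicit} to $n=0$, $n=1$, and $n=m$, using that the relevant polynomial has degree~$0$ or~$1$. The only cosmetic difference is that the paper works directly with the Jacobi form \eqref{eq:explicit} and the explicit values $P_0^{(\alpha,m)}(z)=1$, $P_1^{(\alpha,m-1)}(z)=-m+(m+1+\alpha)\tfrac{z+1}{2}$, whereas you prefer the equivalent Meixner form \eqref{K2} for (i)--(ii); since you already note the Jacobi alternative for (ii), the two arguments coincide.
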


\begin{proof}
Just observe
\[
P_0^{(\alpha,m)}(z) = 1, \quad P_1^{(\alpha,m-1)}(z) = -m+(m+1+\alpha)\frac{z+1}{2}.\qedhere
\]
\end{proof}

Let us also mention the following estimate.

\begin{lemma}\label{lem:B01}
If $\alpha>-1$ and $\beta+n\in\N_0$, then 
\be\label{eq:B01}
\left(\frac{1-x}{2}\right)^{{(\alpha+1)}/{2}}\left(\frac{1+x}{2}\right)^{\beta/2} \abs{P_n^{(\alpha,\beta)}(x)} \le 
\left(\frac{\Gamma(n+\alpha+1)\Gamma(n+\beta+1)}{\Gamma(n+1)\Gamma(n+\alpha+\beta+1)}\right)^{1/2}
\ee
for all $x\in [-1,1]$ and $n\in\N_0$. 
\end{lemma}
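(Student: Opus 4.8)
The plan is to read the inequality straight off the explicit kernel formula of Theorem~\ref{thm:explicit}, combined with the elementary fact that a unitary operator has matrix entries bounded by one in modulus. By Theorem~\ref{thm:spH_0} the operator $H_\alpha$ is self-adjoint, so $\E^{-\I tH_\alpha}$ is unitary on $\ell^2(\N_0)$ for every $t\in\R$; hence $\abs{\inner{\E^{-\I tH_\alpha}\delta_n}{\delta_m}}=\abs{\E^{-\I tH_\alpha}(n,m)}\le 1$ for all $n,m\in\N_0$ by Cauchy--Schwarz. The hypothesis $\beta+n\in\N_0$ forces $\beta\in\Z$ and means precisely that $m:=n+\beta$ is a nonnegative integer, so $P_n^{(\alpha,\beta)}=P_n^{(\alpha,m-n)}$ is exactly the Jacobi polynomial occurring in \eqref{eq:explicit}.

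First I would take $t>0$ and compute the modulus of the right-hand side of \eqref{eq:explicit}. Since $\alpha>-1$ one has $\abs{(1+\I t)^{1+\alpha}}=(1+t^2)^{(1+\alpha)/2}$, while $\abss{\frac{t+\I}{t-\I}}=1$, $\abss{\frac{t}{t-\I}}=\frac{t}{\sqrt{1+t^2}}$, the square-root prefactor is positive, and $P_n^{(\alpha,m-n)}$ is evaluated at the real point $x:=\frac{t^2-1}{t^2+1}$. The change of variables $x=\frac{t^2-1}{t^2+1}$ is the crux: as $t$ runs through $(0,\infty)$ it is a strictly increasing bijection onto $(-1,1)$, and it yields $\frac{1-x}{2}=\frac{1}{1+t^2}$ and $\frac{1+x}{2}=\frac{t^2}{1+t^2}$. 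Substituting these turns $\abs{\E^{-\I tH_\alpha}(n,m)}$ into
\[
\left(\frac{1-x}{2}\right)^{(1+\alpha)/2}\left(\frac{1+x}{2}\right)^{\beta/2}\,\sqrt{\frac{(\alpha+1)_m\,n!}{(\alpha+1)_n\,m!}}\,\abs{P_n^{(\alpha,\beta)}(x)},
\]
and a short Pochhammer-to-gamma conversion identifies $\sqrt{(\alpha+1)_n\,m!/\bigl((\alpha+1)_m\,n!\bigr)}$ with the right-hand side of \eqref{eq:B01}. Bounding the above display by $1$ and dividing by the prefactor then gives \eqref{eq:B01} for every $x\in(-1,1)$.

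Finally the endpoints follow by continuity. At $x=1$ the factor $\bigl(\tfrac{1-x}{2}\bigr)^{(1+\alpha)/2}$ vanishes (as $\alpha>-1$), so the left-hand side is $0$. The only delicate point is $x=-1$ when $\beta<0$ (i.e.\ $m<n$), where the weight $\bigl(\tfrac{1+x}{2}\bigr)^{\beta/2}$ is singular; here one invokes that, via the reflection \eqref{eq:symmetry} together with the reduction of Jacobi polynomials having a negative integer parameter, $P_n^{(\alpha,\beta)}$ has a zero of order $-\beta=n-m$ at $x=-1$, so that $\bigl(\tfrac{1+x}{2}\bigr)^{\beta/2}\abs{P_n^{(\alpha,\beta)}(x)}=O\bigl((1+x)^{-\beta/2}\bigr)\to0$ and the left-hand side extends continuously to $x=-1$ with value $0$. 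I expect this endpoint bookkeeping for negative $\beta$ to be the only genuine subtlety; the heart of the argument --- unitarity of $\E^{-\I tH_\alpha}$ together with the substitution $x=\tfrac{t^2-1}{t^2+1}$ --- is essentially mechanical once Theorem~\ref{thm:explicit} is in hand.
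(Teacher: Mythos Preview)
Your argument is correct and follows essentially the same route as the paper's proof: bound the kernel entries by $1$ via unitarity of $\E^{-\I tH_\alpha}$, substitute $x=\frac{t^2-1}{t^2+1}$ in the explicit formula \eqref{eq:explicit}, and read off \eqref{eq:B01}. The paper compresses this to two sentences; your version simply spells out the modulus computation, the Pochhammer--gamma identification, and the endpoint behaviour (in particular the case $\beta<0$ at $x=-1$), none of which the paper makes explicit.
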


\begin{proof}
Noting that $|\E^{-\I tH_\alpha}(n,m)| \le 1$ for all $t\in \R$ since $\E^{-\I tH_\alpha}$ is a unitary group on $\ell^2$, after the change of variables 
\be\label{eq:x=t}
x=x(t):=\frac{t^2-1}{t^2+1},\quad t\in [0,\infty),
\ee
in \eqref{eq:explicit}, we arrive at \eqref{eq:B01}.
\end{proof}

\begin{remark}
The estimate \eqref{eq:B01} is of course weaker than \eqref{eq:g_unif1} (see below), however,  it holds for a larger
range of parameters. Furthermore note that Lemma \ref{lem:B01}
is also a consequence of \eqref{K3} and \eqref{K4}.
\end{remark}

It is not difficult to see that the weighted $\ell^1\to \ell^\infty$ estimates for the evolution group $\E^{-\I tH_\alpha}$ are closely connected with Bernstein-type estimates for Jacobi polynomials. Indeed, taking absolute values in \eqref{eq:explicit} we get
\be\label{eq:explicit2}
\big|\E^{-\I tH_\alpha}(n,m)\big| =
\frac{\sigma_\alpha(m)}{\sigma_\alpha(n)}\ 
\left(\frac {1}{1+t^2}\right)^{\frac{1+\alpha}{2}} \left(\frac {t^2}{1+t^2}\right)^{\frac{m-n}{2}} \abs{P_n^{(\alpha,m-n)}\left( \frac{t^2-1}{t^2+1}\right)},
\ee
for all $t\in\R$. 
With the rough inequality $t^2/(1+t^2)<1$
one immediately obtains the following estimates.

\begin{lemma}\label{lem:explest}
Let $\alpha>-1$. Then
\be\label{eq:4.07}
(1+t^2)^{\frac{1+\alpha}{2}}\big|\E^{-\I tH_\alpha}(n,m)\big| \le
\begin{cases}\sigma_\alpha(n)\sigma_\alpha(m),&\alpha\ge|m-n|,
\\[\medskipamount]\displaystyle
\frac{\sigma_\alpha(m)}{\sigma_\alpha(n)}\,\binom mn,&m-n\ge\alpha,
\\[\bigskipamount]\displaystyle
\frac{\sigma_\alpha(n)}{\sigma_\alpha(m)}\,\binom nm,&n-m\ge\alpha.
\end{cases}
\ee
for all $t\in\R$, and
\be\label{eq:4.06}
\lim_{t\to+\infty}(1+t^2)^{\frac{1+\alpha}{2}}\big|\E^{-\I tH_\alpha}(n,m)\big| =\sigma_\alpha(n)\sigma_\alpha(m)
\ee
for every fixed $n$, $m\in\N_0$.
\end{lemma}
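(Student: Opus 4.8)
The plan is to read everything off the explicit formula \eqref{eq:explicit2}. Multiplying it by $(1+t^2)^{(1+\alpha)/2}$ cancels the decaying prefactor and leaves the clean identity
\[
(1+t^2)^{\frac{1+\alpha}{2}}\big|\E^{-\I tH_\alpha}(n,m)\big| = \sqrt{\frac{(\alpha+1)_m\,n!}{(\alpha+1)_n\,m!}}\,\Big(\frac{t^2}{1+t^2}\Big)^{\frac{m-n}{2}}\,\Big|P_n^{(\alpha,m-n)}\Big(\tfrac{t^2-1}{t^2+1}\Big)\Big|.
\]
Using the symmetry $\E^{-\I tH_\alpha}(n,m)=\E^{-\I tH_\alpha}(m,n)$ from \eqref{eq:explicit}, I may assume $n\le m$ while proving the inequalities, so that the second Jacobi parameter $\beta:=m-n$ is nonnegative.

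First I would dispose of the middle factor: since $t^2/(1+t^2)\in[0,1)$ and the exponent $(m-n)/2\ge0$, the rough bound $\big(t^2/(1+t^2)\big)^{(m-n)/2}\le1$ holds, reducing the problem to a uniform (in $x\in[-1,1]$) estimate of $|P_n^{(\alpha,m-n)}(x)|$. For this I would invoke \eqref{eq:Cunif}, whose hypothesis $\max(\alpha,\beta)\ge-\tfrac12$ is satisfied because $\max(\alpha,m-n)\ge m-n\ge0$; it gives $|P_n^{(\alpha,m-n)}(x)|\le\binom{n+\max(\alpha,m-n)}{n}$. Splitting according to which parameter attains the maximum produces exactly the two stated cases: if $\alpha\ge m-n$ the bound is $\binom{n+\alpha}{n}=(\alpha+1)_n/n!$, and inserting it collapses the prefactor to $\sqrt{(\alpha+1)_n(\alpha+1)_m/(n!\,m!)}=\sigma_\alpha(n)\sigma_\alpha(m)$; if $m-n\ge\alpha$ the bound is $\binom{m}{n}$, giving $\sqrt{(\alpha+1)_m n!/((\alpha+1)_n m!)}\,\binom{m}{n}=\frac{\sigma_\alpha(m)}{\sigma_\alpha(n)}\binom{m}{n}$. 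The third case $n-m\ge\alpha$ then follows from the second by interchanging $n$ and $m$.

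For the limit I would simply pass to $t\to+\infty$ in the displayed identity: then $(t^2-1)/(t^2+1)\to1$ and $t^2/(1+t^2)\to1$, while $P_n^{(\alpha,m-n)}(1)=\binom{n+\alpha}{n}$ by the normalization \eqref{eq:normal}, which does not depend on the second parameter. The same constant computation as in the first case yields $\sigma_\alpha(n)\sigma_\alpha(m)$, and since this argument uses no sign restriction on $m-n$, it is valid for all fixed $n,m\in\N_0$. I do not expect a genuine obstacle here---the hint $t^2/(1+t^2)<1$ makes the estimates essentially immediate---so the only points demanding care are verifying the applicability hypothesis of \eqref{eq:Cunif} and matching the Pochhammer bookkeeping to the definition \eqref{eq:sigma_a} of $\sigma_\alpha$.
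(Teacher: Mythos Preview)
Your proof is correct and follows essentially the same approach as the paper: apply the uniform bound \eqref{eq:Cunif} to \eqref{eq:explicit2} after discarding the factor $(t^2/(1+t^2))^{(m-n)/2}\le 1$, then split cases according to $\max(\alpha,m-n)$; for the limit, evaluate \eqref{eq:explicit2} at $x\to 1$. One cosmetic difference: the paper cites \eqref{eq:normal_b} for the limit, whereas you cite \eqref{eq:normal}; since $(t^2-1)/(t^2+1)\to 1$, your reference is in fact the more direct one.
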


\begin{proof}
The standard estimate \eqref{eq:Cunif} applied to \eqref{eq:explicit2} gives \eqref{eq:4.07}. Moreover, \eqref{eq:explicit2} together with \eqref{eq:normal_b} implies \eqref{eq:4.06}. 
\end{proof}

Lemma \ref{lem:explest} indicates a decay of order $O(|t|^{-(1+\alpha)})$
for $\E^{-\I tH_\alpha}(n,m)$
if one uses weighted spaces. In fact, we shall show in Section \ref{sec:V} that for $\alpha\ge 0$ the optimal weights for this decay are given by \eqref{eq:sigma_a}. Let us only record the following special cases which can be established directly from Corollary \ref{cor:cases}. 

\begin{corollary}\label{cor:cases_est}
Suppose $\alpha\ge 0$.
 \begin{enumerate}[label=(\roman*), ref=(\roman*), leftmargin=*, widest=iii] 
 \item In the case $n=0$ we have for all $m\in\N_0$
\be\label{eq:erd1b}
(1+t^2)^{\frac{1+\alpha}{2}}\big|\E^{-\I tH_\alpha}(0,m)\big|  \le \sigma_\alpha(m),\quad t\in\R.
\ee
\item In the case $n=1$ we have for all $m\in\N_0$
\be\label{eq:erd2b}
(1+t^2)^{\frac{1+\alpha}{2}}\big|\E^{-\I tH_\alpha}(1,m)\big|  \le \sigma_\alpha(1)\sigma_\alpha(m) ,\quad t\in\R.
\ee
\item In the case $n=m\in \N_0$ we have 
\be\label{eq:erd3b}
(1+t^2)^{\frac{1+\alpha}{2}}\big|\E^{-\I tH_\alpha}(m,m)\big|  \le \sigma_\alpha(m)^2,\quad t\in\R.
\ee
\end{enumerate}
\end{corollary}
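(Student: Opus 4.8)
The plan is to treat the three cases separately, in each case starting from the explicit kernel formula \eqref{eq:explicit2} (equivalently the specializations in Corollary \ref{cor:cases}), multiplying by $(1+t^2)^{(1+\alpha)/2}$ to cancel the factor $(1+t^2)^{-(1+\alpha)/2}$, and exploiting that $|1+\I t|^2=1+t^2$, that $\big|\frac{t+\I}{t-\I}\big|=1$, and that $\big|\frac{-\I t}{1+\I t}\big|^2=\frac{t^2}{1+t^2}\le 1$.

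For (i), since $P_0^{(\alpha,m)}\equiv 1$, equation \eqref{eq:explicit2} yields $(1+t^2)^{(1+\alpha)/2}\big|\E^{-\I tH_\alpha}(0,m)\big|=\sigma_\alpha(m)\big(\tfrac{t^2}{1+t^2}\big)^{m/2}$, and the bound is immediate from $\tfrac{t^2}{1+t^2}\le 1$ (no hypothesis on $\alpha$ is actually needed here). For (iii), the case $n=m$, the square-root prefactor in \eqref{eq:explicit2} collapses to $1$ and the power of $\tfrac{t^2}{1+t^2}$ disappears, leaving $(1+t^2)^{(1+\alpha)/2}\big|\E^{-\I tH_\alpha}(m,m)\big|=\big|P_m^{(\alpha,0)}\big(\tfrac{t^2-1}{t^2+1}\big)\big|$. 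As $x=\tfrac{t^2-1}{t^2+1}$ ranges over $[-1,1)$ and $\alpha\ge 0=\max(\alpha,0)\ge -\tfrac12$, the endpoint bound \eqref{eq:Cunif} gives $\big|P_m^{(\alpha,0)}(x)\big|\le\binom{m+\alpha}{m}=\sigma_\alpha(m)^2$, which is the claim.

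The remaining case (ii) is where the work lies. Using $P_1^{(\alpha,m-1)}\big(\tfrac{t^2-1}{t^2+1}\big)=\tfrac{(1+\alpha)t^2-m}{t^2+1}$ (from the formula for $P_1$ in the proof of Corollary \ref{cor:cases}) together with the substitution $s:=\tfrac{t^2}{1+t^2}\in[0,1)$, the estimate \eqref{eq:erd2b} reduces, after clearing the common factor $\sigma_\alpha(m)$ and simplifying with $\sigma_\alpha(1)=\sqrt{1+\alpha}$, to the single scalar inequality
\[
s^{(m-1)/2}\,\big|(1+\alpha+m)s-m\big|\le 1+\alpha,\qquad s\in[0,1).
\]
For $s$ beyond the zero $s_0:=\tfrac{m}{1+\alpha+m}$ of the bracket, the left-hand side is increasing and bounded by its limit $1+\alpha$ as $s\to 1$, so the inequality is immediate there; the case $m=1$ (where $s^{(m-1)/2}\equiv 1$) is likewise settled by inspecting the endpoints. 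The delicate region is $0\le s\le s_0$ with $m\ge 2$, where I would locate the unique interior maximum of $s^{(m-1)/2}\big(m-(1+\alpha+m)s\big)$ at $s^\ast=\tfrac{m(m-1)}{(m+1)(1+\alpha+m)}$ and bound its value $\big(s^\ast\big)^{(m-1)/2}\tfrac{2m}{m+1}$ by $1+\alpha$.

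The main obstacle is exactly this last estimate. Since $1+\alpha+m\ge m$ forces $s^\ast\le\tfrac{m-1}{m+1}$, it suffices to prove the $\alpha$-free bound $\big(\tfrac{m-1}{m+1}\big)^{(m-1)/2}\tfrac{2m}{m+1}\le 1$ for all $m\ge 2$, which I would obtain by taking logarithms and using $\ln(1-x)\le -x$ to reduce it to the elementary one-variable inequality $\ln(2-u)\le 1-u$ on $(0,\tfrac23]$ (with $u=\tfrac{2}{m+1}$), verified by monotonicity. As the interior maximum is then bounded by $1\le 1+\alpha$ uniformly in $m$, this closes \eqref{eq:erd2b}; the endpoint $m=0$ of (ii), not covered by the $m\ge 1$ formula \eqref{eq:erd2}, follows from (i) via the symmetry $\E^{-\I tH_\alpha}(1,0)=\E^{-\I tH_\alpha}(0,1)$.
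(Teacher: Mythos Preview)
Your proof is correct and follows essentially the same route as the paper's. Both arguments dispatch (i) and (iii) directly from Corollary~\ref{cor:cases} together with \eqref{eq:Cunif}, and for (ii) both pass to the variable $s=t^2/(1+t^2)$, reduce to $s^{(m-1)/2}|(1+\alpha+m)s-m|\le 1+\alpha$, locate the same interior critical point $s^\ast=\tfrac{m(m-1)}{(m+1)(1+\alpha+m)}$, and bound the value there by a constant $\le 1$ independent of $\alpha$; the only difference is cosmetic---the paper estimates the critical value via the monotonicity of $(m/(m+1))^m$ to get the numerical bound $2\sqrt{2}/3$, whereas you use the logarithmic inequality $\ln(2-u)\le 1-u$ on $(0,2/3]$. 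One small slip in your write-up of (iii): you wrote ``$\alpha\ge 0=\max(\alpha,0)$'', but of course $\max(\alpha,0)=\alpha$ when $\alpha\ge0$; the intended point---that $\max(\alpha,0)\ge -\tfrac12$ so \eqref{eq:Cunif} applies with maximum $\binom{m+\alpha}{m}$---is unaffected.
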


\begin{proof}
(i) and (iii) are immediate from Corollary \ref{cor:cases}.
This works for (ii) as well if $\alpha\ge |m-1|$ or if $m=0$. Otherwise
we use for (ii) the new variable $x=t^2/(1+t^2)$,
so that (ii) is equivalent to
\[
\max_{x\in [0,1]} |f_m(x)| \le 1+\alpha, \quad f_m(x) = x^{\frac{m-1}{2}}\big((m+1+\alpha)x-m\big)
\]
Notice that
\[
f_1(x) = (2+\alpha)x - 1
\]
and hence
\[
\max_{x\in[0,1]} |f_1(x)| = \max(-1,1+\alpha) = 1+\alpha.
\]
For $m>1$ one computes
\[
f_m'(x) = x^{\frac{m-3}{2}}\Big(\frac{m-1}{2}\big((m+1+\alpha)x - m\big) + (m+1+\alpha)x \Big).
\]
Therefore,
\[
\max|f_m(x)| = \max(|f_m(0)|, |f_m(1)|, |f_m(x_0)|) = \max( |f_m(x_0)|, 1+\alpha)
\]
where
\[
x_0=\frac{m(m-1)}{(m+1)(m+1+\alpha)}.
\]
Moreover,
\begin{align*}
\abs{f_m(x_0)} = \frac{2m}{m+1}&\left(\frac{m(m-1)}{(m+1)(m+1+\alpha)}\right)^{\frac{m-1}{2}}\\
& < 2\left( \left(\frac{m}{m+1}\right)^{m}\left(\frac{m-1}{m}\right)^{m-1} \right)^{1/2} \le \frac{2\sqrt{2}}{3}
\end{align*}
for $m\ge 2$ since the sequence $\{(\frac{m}{m+1})^{m}\}_{m\ge 1}$ is strictly decreasing.
\end{proof}  

We finish this section with another representation for the kernel of the evolution group. Define the following functions
\be\label{eq:F2}
F_n^{(\alpha)}(t) = \frac{1}{(1/2 + \I t)^{1+\alpha}} \left(\frac{\I t-1/2}{\I t+1/2}\right)^n,
\ee
and 
\be\label{eq:G2}
G_n^{(\alpha)}(t) = \frac{1}{1/2 + \I t} \sum_{k=0}^n \binom{k+\alpha-1}{k}\left(\frac{\I t - 1/2}{\I t+ 1/2}\right)^{n-k},
\ee
for all $n\in\N_0$ and $t\in\R$.
Note that the right-hand side of
\eqref{eq:G2} involves the truncated binomial series
\cite[Section 2.5.4]{htf1}
\[
\sum_{k=0}^n\frac{(\alpha)_k}{k!}\,z^k=\hyp21{-n,\al}{-n}z.
\]
\begin{theorem}\label{th:a2}
Let $F_n^{(\alpha)}$ and $G_m^{(\alpha)}$ be given by \eqref{eq:F2} and \eqref{eq:G2}. Then
\begin{align}\label{eq:a_repr}
\E^{-\I tH_\alpha}(n,m) = (-1)^{n+m}
\frac{\sigma_\alpha(n)}{\sigma_\alpha(m)}\ \Big(F_n^{(\alpha)} * G_m^{(\alpha)}\Big)(t), 
\end{align}
where $(f\ast g)(t) = \frac{1}{2\pi}\int_{\R} f(x) g(t-x)dx$ is the convolution of $f $ and $g$.
\end{theorem}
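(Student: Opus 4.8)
The plan is to recognize each of the two factors in \eqref{eq:a_repr} as a (rescaled) Fourier transform of an explicit Laguerre function supported on $\R_+$, and then to invoke the convolution theorem so that the pointwise \emph{product} of these Laguerre functions reproduces exactly the integrand appearing in the representation \eqref{PP}. Throughout I would substitute $s=1/2+\I t$, so that $\I t+1/2=s$ and $\I t-1/2=s-1$; with the convention $(f*g)(t)=\frac1{2\pi}\int_\R f(x)g(t-x)\,dx$ and $\hat h(t):=\int_\R \E^{-\I t\lambda}h(\lambda)\,d\lambda$, a direct Fubini computation (the $x$-integral producing $2\pi\,\delta(\lambda-\mu)$) gives the identity $(\hat f*\hat g)(t)=\widehat{fg}(t)$, which is the engine of the argument.

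First I would identify $F_n^{(\alpha)}$. The substitution turns \eqref{eq:F2} into $F_n^{(\alpha)}(t)=(s-1)^n s^{-n-\alpha-1}$. Comparing with the Laplace transform already computed in the proof of Theorem~\ref{thm:explicit} in the case $m=0$ (where $L_0^{(\alpha)}\equiv1$), namely $I_{n,0}^{(\alpha)}(p)=\frac1{\Gamma(\alpha+1)}\int_0^\infty \E^{-p\lambda}\lambda^\alpha L_n^{(\alpha)}(\lambda)\,d\lambda=\frac{(\alpha+1)_n}{n!}\frac{(p-1)^n}{p^{n+\alpha+1}}$ evaluated at $p=s$, I would conclude
\[
F_n^{(\alpha)}(t)=\frac{n!}{\Gamma(n+\alpha+1)}\int_0^\infty \E^{-\I t\lambda}\,\E^{-\lambda/2}\lambda^\alpha L_n^{(\alpha)}(\lambda)\,d\lambda,
\]
so that $F_n^{(\alpha)}$ is, up to the constant $n!/\Gamma(n+\alpha+1)$, the Fourier transform of $\E^{-\lambda/2}\lambda^\alpha L_n^{(\alpha)}(\lambda)\id_{\R_+}(\lambda)$.

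For $G_m^{(\alpha)}$ the same substitution rewrites \eqref{eq:G2} as $\sum_{k=0}^m \binom{k+\alpha-1}{k}(s-1)^{m-k}s^{-(m-k)-1}$. The essential step is to read each summand $(s-1)^{m-k}s^{-(m-k)-1}$ as the $\alpha=0$ Laplace transform $\int_0^\infty \E^{-s\lambda}L_{m-k}(\lambda)\,d\lambda$ (the case $\alpha=0$, arbitrary index, of the formula above), and then to collapse the resulting sum by the standard Laguerre connection formula $L_m^{(\alpha)}(\lambda)=\sum_{k=0}^m\frac{(\alpha)_k}{k!}L_{m-k}(\lambda)=\sum_{k=0}^m\binom{k+\alpha-1}{k}L_{m-k}(\lambda)$. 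This yields $G_m^{(\alpha)}(t)=\int_0^\infty \E^{-\I t\lambda}\,\E^{-\lambda/2}L_m^{(\alpha)}(\lambda)\,d\lambda$, i.e. $G_m^{(\alpha)}$ is precisely the Fourier transform of $\E^{-\lambda/2}L_m^{(\alpha)}(\lambda)\id_{\R_+}(\lambda)$.

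Finally I would combine the two: the two Laguerre functions lie in $L^1(\R)\cap L^2(\R)$ for $\alpha>-1/2$, so $(\hat f*\hat g)=\widehat{fg}$ applies and the product is $\E^{-\lambda}\lambda^\alpha L_n^{(\alpha)}(\lambda)L_m^{(\alpha)}(\lambda)\id_{\R_+}(\lambda)$, whose Fourier transform is exactly the integral in \eqref{PP}; the full range $\alpha>-1$ then follows by analytic continuation, both sides of \eqref{eq:a_repr} being analytic in $\alpha$. Matching the prefactor $n!/\Gamma(n+\alpha+1)$ against \eqref{PP} and simplifying the Pochhammer/factorial constants produces the stated identity (with $H_0$ on the left-hand side of \eqref{eq:a_repr} being a misprint for $H_\alpha$). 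I expect the only genuine content to be the collapse of the binomial sum via the connection formula in the third paragraph; the remainder is bookkeeping in the variable $s=1/2+\I t$ together with the routine integrability check justifying the convolution theorem.
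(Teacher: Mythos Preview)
Your proposal is correct and follows essentially the same route as the paper: identify $F_n^{(\alpha)}$ and $G_m^{(\alpha)}$ as Fourier transforms of $\E^{-\lambda/2}\lambda^\alpha L_n^{(\alpha)}(\lambda)\id_{\R_+}$ and $\E^{-\lambda/2}L_m^{(\alpha)}(\lambda)\id_{\R_+}$, respectively, and then apply the convolution theorem against \eqref{PP}. The only difference is cosmetic---the paper quotes the two transform identities from the Erd\'elyi tables \cite[(4.11.28), (4.11.27)]{erd}, whereas you rederive the first from $I_{n,0}^{(\alpha)}$ and the second via the Laguerre connection formula; your integrability/analytic-continuation remark is more careful than needed, since both factors already lie in $L^1(\R_+)$ for all $\alpha>-1$.
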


\begin{proof}
Notice that by \cite[(4.11.28)]{erd}
\[
\frac{\Gamma(n+1)}{\Gamma(n+\alpha+1)} \int_0^\infty \E^{-\I t\lam} L_n^{(\alpha)}(\lam) \E^{-\lam/2}\lam^\alpha d\lam = \frac{1}{(1/2 + \I t)^{1+\alpha}} \left(\frac{\I t-1/2}{\I t+1/2}\right)^n  = F_n^{(\alpha)}(t),
\]
and by \cite[(4.11.27)]{erd} 
\[
\int_0^\infty \E^{-\I t\lam} L_m^{(\alpha)}(\lam) \E^{-\lam/2} d\lam = \frac{1}{1/2 + \I t} \sum_{k=0}^m \binom{k+\alpha-1}{k}\left(\frac{\I t-1/2}{\I t+1/2}\right)^{m-k}= G_m^{(\alpha)}(t).
\]
It remains to note that the Fourier transform of a product of two $L^1$ functions is equal to the convolution of their Fourier transforms.
\end{proof}

\section{Irreducible representations of $\SU(2)$ and Jacobi polynomials}\label{sec:II} 

 The theory of representations of Lie groups provides a unified point of view on the theory of basic classes of special functions. In particular, the connection between irreducible representations of the special unitary group
$\SU(2)$ and Jacobi polynomials is widely known. In this section we
give a brief account of this connection (for a detailed discussion we refer to \cite{ko}, \cite[Ch.~III]{vil}, \cite[Ch.~6]{vil-kli}).
First, recall that a group homomorphism $\varrho:\cG \to \GL(\cH)$ of a group $\cG$ into a group of all invertible linear transformation $\GL(\cH)$ on a finite dimensional complex linear space $\cH$ is called a
{\em representation} of $\cG$ (by linear operators). The dimension of $\cH$ is called the {\em degree} of the representation $\varrho$. A linear subspace $\ti\cH\subset \cH$ is called {\em invariant} with respect to the representation $\varrho$ of $\cG$ if $\varrho(g)\ti\cH \subset \ti\cH$ for all $g\in\cG$. A representation $\varrho$ is called {\em irreducible} if $\{0\}$ and $\cH$ are the only invariant subspaces.

In order to construct an irreducible representation of $\SU(2)$ of degree $d\in\N$ one needs to consider the space $\cH_d$ of homogeneous polynomials of degree $d-1$. Set $l:=(d-1)/2$.  The inner product on $\cH_d$ is defined by the requirement that the normalized monomials
\be\label{eq:basis_d}
\psi_k^d(z_1,z_2) = \binom{2l}{l-k}^{1/2}z_1^{l-k} z_2^{l+k},\quad k\in \{-l,-l+1,\dots,l-1,l\},
\ee
form an orthonormal basis.

The group $\SU(2)$ consists of all $2\times 2$ unitary matrices of determinant $1$. It is immediate to check that each $A\in \SU(2)$ has the form
\be
A=\begin{pmatrix} a & b \\ -b^\ast & a^\ast\end{pmatrix}, \quad |a|^2 + |b|^2 =1,
\ee
where $z^\ast$ denotes the complex conjugate of $z$, 
and hence $\SU(2)$ is homeomorphic to the unit sphere $\mathbb{S}^3$ in $\R^4$. Moreover, $A$ admits the following decomposition
\begin{align*}
A=&A(\phi,\theta,\varphi) = \begin{pmatrix} \cos(\theta)\, \E^{\I(\phi+\varphi)}& - \sin(\theta)\, \E^{\I(\phi-\varphi)}\\ \sin(\theta)\, \E^{-\I(\phi-\varphi)}& \cos(\theta)\,\E^{-\I(\phi+\varphi)}\end{pmatrix}\\
&=\begin{pmatrix} \E^{\I\phi} & 0 \\ 0 & \E^{-\I\phi} \end{pmatrix}\begin{pmatrix} \cos(\theta) & -\sin(\theta) \\ \sin(\theta) & \cos(\theta)\end{pmatrix} \begin{pmatrix} \E^{\I\varphi} & 0 \\ 0 & \E^{-\I\varphi} \end{pmatrix}  = A(\phi,0,0) A(0,\theta,0) A(0,0,\varphi), 
\end{align*}
where $\phi\in [0,\pi)$, $\theta\in [0,\pi/2]$ and $\varphi\in [0, \pi)$ are determined uniquely by
\[
\cos(\theta)=|a|,\quad \arg(a) = \phi+\varphi,\quad \arg(b) = \pi + \phi-\varphi,
\] 
if $ab\neq 0$. Now define a linear operator $\varrho_d(A) \in \GL(\cH_d)$ by 
\be\label{eq:varrho}
\varrho_d(A)\colon f(z_1,z_2) \mapsto f(az_1 - b^\ast z_2, bz_1+a^\ast z_2).
\ee
It is straightforward to check that $\varrho_d$ is well defined.

\begin{theorem}
The mapping  $\varrho_d\colon\SU(2) \to \GL(\cH_d)$ is an irreducible unitary representation of degree $d$ of $\SU(2)$. 
\end{theorem}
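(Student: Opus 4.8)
The plan is to verify in turn the four assertions packaged into the statement: that $\cH_d$ has dimension $d$ (so that $\varrho_d$ has degree $d$), that $\varrho_d$ is a group homomorphism into $\GL(\cH_d)$, that each $\varrho_d(A)$ is unitary for the inner product fixed by declaring the $\psi_k^d$ of \eqref{eq:basis_d} orthonormal, and finally that the representation is irreducible. The dimension count is immediate: the monomials $z_1^{d-1-j}z_2^{j}$ with $j\in\{0,\dots,d-1\}$ form a basis of the space of homogeneous polynomials of degree $d-1$; equivalently, the basis \eqref{eq:basis_d} has $d$ elements.

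For the homomorphism property I would first record that the substitution in \eqref{eq:varrho} is precomposition $(\varrho_d(A)f)(z)=f(A^{T}z)$, where $z=(z_1,z_2)$ is read as a column vector and $A^{T}$ is the transpose of $A$. Since both precomposition and transposition reverse order, $\varrho_d(A)\varrho_d(B)f=f\circ(B^{T}A^{T})=f\circ(AB)^{T}=\varrho_d(AB)f$, so $\varrho_d$ is a homomorphism; it lands in $\GL(\cH_d)$ because each $\varrho_d(A)$ preserves homogeneity degree and has inverse $\varrho_d(A^{-1})$.

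For unitarity I would introduce the Bargmann--Fock inner product $\langle f,g\rangle_{B}=\pi^{-2}\int_{\C^2}f\,\bar g\,\E^{-|z_1|^2-|z_2|^2}\,dm$, for which distinct monomials are orthogonal with $\|z_1^{p}z_2^{q}\|_{B}^2=p!\,q!$. A short computation then yields $\|\psi_k^d\|_{B}^2=\binom{2l}{l-k}(l-k)!\,(l+k)!=(d-1)!$ for every $k$, together with orthogonality, so on $\cH_d$ the inner product fixed in the statement is exactly $(d-1)!^{-1}\langle\cdot,\cdot\rangle_{B}$. Because the transpose of a unitary matrix is again unitary, the substitution $z\mapsto A^{T}z$ preserves both $|z_1|^2+|z_2|^2$ and Lebesgue measure $dm$ (as $|\det A^{T}|=1$); hence $\langle\cdot,\cdot\rangle_{B}$, and therefore the statement's inner product, is $\varrho_d$-invariant, which is unitarity.

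The heart of the matter is irreducibility, which I expect to be the main obstacle. Here I would use the diagonal torus: for $A(\phi,0,0)=\mathrm{diag}(\E^{\I\phi},\E^{-\I\phi})$ the operator $\varrho_d$ acts on $\psi_k^d$ by the scalar $\E^{-2\I k\phi}$, and these characters are pairwise distinct for $k\in\{-l,\dots,l\}$. Consequently any subspace invariant under $\varrho_d$ is invariant under the torus and therefore a direct sum of the one-dimensional weight lines $\C\,\psi_k^d$. To connect these lines I would pass to the differentiated action: differentiating $t\mapsto\varrho_d(\exp tX)$ at $t=0$ gives $d\varrho_d(X)=\sum_{i,j}X_{ji}\,z_j\partial_{z_i}$, so among the images of the complexified Lie algebra one finds the raising/lowering operators $z_1\partial_{z_2}$ and $z_2\partial_{z_1}$, which shift $k$ by $\mp 1$ with nonzero coefficients on every monomial in their range. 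A nonzero invariant subspace thus contains some $\psi_{k_0}^d$ and, being stable under these operators, contains every $\psi_k^d$; hence it is all of $\cH_d$. The one point demanding care is the passage from group invariance to invariance under $z_1\partial_{z_2}$ and $z_2\partial_{z_1}$: a complex subspace stable under $\varrho_d$ of all one-parameter subgroups is stable under every $d\varrho_d(X)$ with $X\in\mathfrak{su}(2)$, hence under all complex linear combinations, and these two operators lie in $d\varrho_d(\mathfrak{su}(2)\otimes\C)$.
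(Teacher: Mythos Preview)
Your proof is correct and self-contained. The paper itself does not prove this theorem; immediately after the statement it simply records that ``the proof of this result can be found in \cite[Section III.2.3]{vil} (see also \cite{ko}).'' So there is no in-paper argument to compare against. Your approach---identifying the inner product on $\cH_d$ with a scalar multiple of the Bargmann--Fock inner product to obtain unitarity, and proving irreducibility by combining the torus weight decomposition with the raising/lowering operators $z_1\partial_{z_2}$, $z_2\partial_{z_1}$ coming from the complexified Lie algebra---is the standard one and is essentially what one finds in the cited references. The only delicate point, which you handle correctly, is the passage from group invariance to invariance under the complexified Lie algebra action; since the invariant subspace is complex, closure under $d\varrho_d(\mathfrak{su}(2))$ automatically gives closure under $d\varrho_d(\mathfrak{sl}(2,\C))$, where the ladder operators live.
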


The proof of this result can be found in \cite[Section III.2.3]{vil}
(see also \cite{ko}). It turns out that the matrix representation of $\varrho_d(A)$ in the basis \eqref{eq:basis_d} (the so-called {\em Wigner d-matrix}) can be expressed by means of Jacobi polynomials. Indeed, introduce the function
\be\label{eq:g}
\g_n^{(\alpha,\beta)}(x) = \left(\frac{\Gamma(n+1)\Gamma(n+\alpha+\beta+1)}{\Gamma(n+\alpha+1)\Gamma(n+\beta+1)}\right)^{1/2}
\left(\frac{1-x}{2}\right)^{{\alpha}/{2}}\left(\frac{1+x}{2}\right)^{\beta/2} P_n^{(\alpha,\beta)}(x).
\ee
Clearly, $\big\{\g_n^{(\alpha,\beta)}\big\}_{n\in\N_0}$ is an orthogonal system in $L^2(-1,1)$ and by \eqref{eq:02}
\be
\int_{-1}^1 \big|\g_n^{(\alpha,\beta)}(x)\big|^2dx = \frac{2}{2n+\alpha+\beta+1}.
\ee 
Moreover, comparing \eqref{eq:g} with \eqref{eq:02}, we get
\be\label{eq:g=ort_p}
\g_n^{(\alpha,\beta)}(x) = \left( \frac{2}{2n+\alpha+\beta+1}\right)^{1/2}\sqrt{w^{(\alpha,\beta)}(x)}\;\p_n^{(\alpha,\beta)}(x).
\ee
Now we are ready to state the connection between $\varrho_d$ and Jacobi polynomials (see \cite[Section III.3.9]{vil}).

\begin{theorem}\label{thm:d-matrix}
Let $A=A(\phi,\theta,\varphi)\in \SU(2)$, $d\in\N$ and $\varrho_d$ be given by \eqref{eq:varrho}. Let also $l=(d-1)/2$ and $k$, $j\in\{-l,-l+1,\dots,l-1,l\}$. Then for all $k\ge 0$ and $|j|\le  k$  
\be\label{eq:d-matrix}
\varrho_d(A)_{k,j} := \big\langle \varrho_d(A)\psi_j^d,\psi_k^d\big\rangle_{\cH_d} =  
 \E^{-2\I (k\phi+j\varphi)} \g_{l-k}^{(k-j,k+j)}\big(\cos(2\theta)\big).
\ee
\end{theorem}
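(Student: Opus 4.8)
The plan is to exploit the homomorphism property of $\varrho_d$ together with the factorization $A(\phi,\theta,\varphi)=A(\phi,0,0)\,A(0,\theta,0)\,A(0,0,\varphi)$ recorded above, which gives $\varrho_d(A)=\varrho_d(A(\phi,0,0))\,\varrho_d(A(0,\theta,0))\,\varrho_d(A(0,0,\varphi))$. The two outer diagonal factors act on the basis by scalars: since $A(\phi,0,0)$ corresponds to $a=\E^{\I\phi}$, $b=0$, formula \eqref{eq:varrho} gives $\varrho_d(A(\phi,0,0))\psi_k^d=\E^{-2\I k\phi}\psi_k^d$, and likewise $\varrho_d(A(0,0,\varphi))\psi_j^d=\E^{-2\I j\varphi}\psi_j^d$. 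Using unitarity to move the left factor onto $\psi_k^d$, one peels off both phases and is reduced to
\[
\varrho_d(A)_{k,j}=\E^{-2\I(k\phi+j\varphi)}\,\big\langle \varrho_d(A(0,\theta,0))\psi_j^d,\psi_k^d\big\rangle ,
\]
so that everything hinges on identifying the middle matrix element with $\g_{l-k}^{(k-j,k+j)}(\cos 2\theta)$.

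For the middle factor I would expand $\varrho_d(A(0,\theta,0))\psi_j^d$ explicitly. Here $A(0,\theta,0)$ corresponds to $a=\cos\theta$, $b=-\sin\theta$, so \eqref{eq:varrho} sends $\psi_j^d$ to $\binom{2l}{l-j}^{1/2}(\cos\theta\,z_1+\sin\theta\,z_2)^{l-j}(-\sin\theta\,z_1+\cos\theta\,z_2)^{l+j}$. Since the $\psi_k^d$ form an orthonormal basis, $\big\langle \varrho_d(A(0,\theta,0))\psi_j^d,\psi_k^d\big\rangle$ equals the coefficient of $z_1^{l-k}z_2^{l+k}$ in this product, divided by $\binom{2l}{l-k}^{1/2}$. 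Applying the binomial theorem to both factors and collecting the terms of total $z_1$-degree $l-k$ (equivalently $z_2$-degree $l+k$) produces one finite sum in $\cos\theta$ and $\sin\theta$, carrying the sign $(-1)^{l+j-q}$ coming from the $(-\sin\theta\,z_1)$ powers.

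The key step is to recognize this sum as a Jacobi polynomial. I would pull out the common factor $(\sin\theta)^{k-j}(\cos\theta)^{k+j}$ (legitimate since $k\ge 0$ and $|j|\le k$) and use $\tfrac{1-\cos2\theta}{2}=\sin^2\theta$, $\tfrac{1+\cos2\theta}{2}=\cos^2\theta$. After reindexing — which via $\binom{m}{r}=\binom{m}{m-r}$ turns $\binom{l+j}{l+k-p}$ into $\binom{l+j}{i}$ and $\binom{l-j}{p}$ into $\binom{l-j}{l-k-i}$, while the sign collapses to $(-1)^i$ — the surviving sum matches the explicit expansion \eqref{K5} of $P_{l-k}^{(k-j,k+j)}(\cos 2\theta)$ term by term. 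This yields
\[
\big\langle \varrho_d(A(0,\theta,0))\psi_j^d,\psi_k^d\big\rangle=\frac{\binom{2l}{l-j}^{1/2}}{\binom{2l}{l-k}^{1/2}}\,(\sin\theta)^{k-j}(\cos\theta)^{k+j}\,P_{l-k}^{(k-j,k+j)}(\cos 2\theta).
\]
To finish I would check that the binomial ratio is exactly the normalizing constant in \eqref{eq:g}: with $n=l-k$, $\alpha=k-j$, $\beta=k+j$ one has $n+\alpha=l-j$, $n+\beta=l+j$, $n+\alpha+\beta+1=l+k+1$, so $\binom{2l}{l-j}/\binom{2l}{l-k}=\tfrac{(l-k)!(l+k)!}{(l-j)!(l+j)!}$ reproduces the Gamma-quotient of \eqref{eq:g}, and the weight factors $(\sin\theta)^{k-j}(\cos\theta)^{k+j}$ are precisely $\big(\tfrac{1-\cos2\theta}{2}\big)^{\alpha/2}\big(\tfrac{1+\cos2\theta}{2}\big)^{\beta/2}$. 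Combining this with the two phase factors gives \eqref{eq:d-matrix}.

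The main obstacle is the bookkeeping in the third paragraph: keeping the two binomial expansions, the sign $(-1)^{l+j-q}$, and the exponents of $\cos\theta$ and $\sin\theta$ aligned long enough to see that, after extracting the weight factor, the remaining sum is genuinely $P_{l-k}^{(k-j,k+j)}$ evaluated at $\cos 2\theta$ and not a neighbouring polynomial. Once the single reindexing $q=l+k-p$, $p=i+k-j$ is carried out and the normalization identity is verified, the rest follows routinely from the homomorphism property and the definition \eqref{eq:g}.
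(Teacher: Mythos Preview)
Your argument is correct. The paper itself does not give a proof of this theorem; it simply refers to \cite[Section~III.3.9]{vil} for the result. Your direct computation---factoring $A$ via the Euler-angle decomposition, reading off the diagonal phases from the outer factors, expanding the middle factor by the binomial theorem, and then matching the resulting sum with the explicit formula \eqref{K5} for $P_{l-k}^{(k-j,k+j)}(\cos 2\theta)$---is exactly the standard route taken in Vilenkin's book, so there is nothing materially different to compare. The bookkeeping you flag as the obstacle (the substitution $p=i+k-j$, the identities $\binom{l+j}{l+j-i}=\binom{l+j}{i}$ and $\binom{l-j}{i+k-j}=\binom{l-j}{l-k-i}$, and the sign collapsing to $(-1)^i$) checks out, as does the identification of $\binom{2l}{l-j}/\binom{2l}{l-k}$ with the Gamma-quotient in \eqref{eq:g}.
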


Since $\varrho_d(A)$ is a unitary matrix and $k\pm j\in\N_0$ in the formulation of Theorem \ref{thm:d-matrix}, we immediately conclude that 
\be\label{eq:g_unif1}
\big|\g_n^{(\alpha,\beta)}(x)\big| \le 1
\ee
for all $x\in [-1,1]$, $\alpha$, $\beta\in\N_0$ and $n\in\N_0$.
An analytic proof of a refined version of \eqref{eq:g_unif1} can be found in \cite{haa} (see inequality (20) on p.234).

\begin{lemma}[\cite{haa}]\label{lem:haa1}
\be\label{eq:g_unif2}
\big|\g_n^{(\alpha,\beta)}(x)\big| \le \left(\frac{(n+1)(n+\alpha+\beta+1)}{(n+\alpha+1)(n+\beta+1)}\right)^{1/4}
\ee
for all $x\in [-1,1]$, $\alpha$, $\beta\in\N_0$ and $n\in\N_0$.
\end{lemma}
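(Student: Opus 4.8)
The plan is to deduce the pointwise bound from the second-order differential equation satisfied by $\g_n^{(\alpha,\beta)}$, viewed as an eigenfunction of a one-dimensional Schr\"odinger operator. Writing $g:=\g_n^{(\alpha,\beta)}$ and using that $y=P_n^{(\alpha,\beta)}$ solves Jacobi's equation $(1-x^2)y''+[\beta-\alpha-(\alpha+\beta+2)x]y'+n(n+\alpha+\beta+1)y=0$, I would substitute $g=(\text{const})\cdot((1-x)/2)^{\alpha/2}((1+x)/2)^{\beta/2}\,y$ (the factor of \eqref{eq:g}) and check that the first-order term cancels, leaving the self-adjoint normal form
\[
\big((1-x^2)g'\big)'+c(x)\,g=0,\qquad c(x)=\frac{N^2-1}{4}-\frac{\alpha^2}{2(1-x)}-\frac{\beta^2}{2(1+x)},
\]
with $N:=2n+\alpha+\beta+1$. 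In the variable $x=\cos 2\theta$ of Theorem \ref{thm:d-matrix} this is a trigonometric Schr\"odinger equation of P\"oschl--Teller type, reflecting that $g$ is a reduced matrix coefficient of $\varrho_d$ with Casimir eigenvalue $N^2$.

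Next I would run the classical Sonin--Szeg\H{o} argument on the oscillatory interval $\{c>0\}$, where $S(x):=g(x)^2+(1-x^2)g'(x)^2/c(x)\ge g(x)^2$. Differentiating and eliminating $g''$ through the normal form yields the clean identity $S'(x)=\big(g'(x)^2/c(x)^2\big)\,D(x)$ with $D(x)=2xc(x)-(1-x^2)c'(x)=\tfrac12\big[(N^2-1)x+\alpha^2-\beta^2\big]$, which is affine in $x$ and changes sign exactly once, at $x_*=(\beta^2-\alpha^2)/(N^2-1)$. Hence $S$ is unimodal (decreasing, then increasing), so the successive local maxima of $g^2$ — attained where $g'=0$, at which points $S=g^2$ — increase monotonically as one moves away from $x_*$ toward either turning point $c(x)=0$. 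Moreover, at an interior global maximiser $x_0$ one has $g'(x_0)=0$, and $(g^2)''(x_0)\le0$ combined with the normal form forces $c(x_0)\ge0$; thus the global maximum of $g^2$ is the outermost of these critical values.

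The main obstacle is converting this qualitative picture into the explicit constant $\big((n+1)(n+\alpha+\beta+1)/((n+\alpha+1)(n+\beta+1))\big)^{1/4}$, equivalently $\big(((l+1)^2-k^2)/((l+1)^2-j^2)\big)^{1/4}$ under the substitution $n=l-k$, $\alpha=k-j$, $\beta=k+j$ of Theorem \ref{thm:d-matrix}. Because $S$ itself blows up at the zeros of $c$, the outermost critical value of $g^2$ is not directly readable from $S$; one must locate the last critical point relative to the turning point and feed in the explicit shape of $c$, tracking how the amplitude of $g$ grows between successive critical points. This quantitative comparison is the analytic heart of the estimate and is exactly what is supplied in \cite{haa}. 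As consistency checks I note that the claimed bound collapses to the unitarity estimate \eqref{eq:g_unif1} precisely when $\min(\alpha,\beta)=0$ (i.e. $|j|=k$) and to Bernstein's constant $1$ in the Legendre case $\alpha=\beta=0$, and that it is symmetric under $\alpha\leftrightarrow\beta$, consistent with \eqref{eq:symmetry}; the degenerate cases $\alpha=0$ or $\beta=0$, where $c$ loses a singular term and an endpoint $x=\pm1$ becomes an admissible maximiser, would be treated separately.
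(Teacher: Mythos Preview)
The paper does not prove this lemma at all: it is stated with attribution to \cite{haa} and the surrounding text refers the reader to ``inequality (20) on p.~234'' of that paper for the analytic proof. So there is nothing to compare your argument \emph{to} within this paper; both the paper and you ultimately outsource the result to Haagerup--Schlichtkrull.

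That said, your proposal is not a proof either, and you say so yourself. The preliminary computations are fine: the self-adjoint form $((1-x^2)g')'+c(x)g=0$ with $c(x)=\tfrac{N^2-1}{4}-\tfrac{\alpha^2}{2(1-x)}-\tfrac{\beta^2}{2(1+x)}$ is correct, the Sonin function $S=g^2+(1-x^2)g'^2/c$ does satisfy $S'=(g'^2/c^2)D$ with the affine $D(x)=\tfrac12[(N^2-1)x+\alpha^2-\beta^2]$, and your qualitative picture (critical values of $g^2$ increasing away from $x_*$) is right. But the entire content of the lemma is the specific constant $\big((n+1)(n+\alpha+\beta+1)/((n+\alpha+1)(n+\beta+1))\big)^{1/4}$, and precisely at the point where that constant must be extracted you write ``this quantitative comparison \dots\ is exactly what is supplied in \cite{haa}.'' So what you have written is a well-motivated preamble to a proof, not a proof: the Sonin envelope argument by itself only tells you where the maximum sits, not how large it is, and bounding the outermost critical value below the stated constant requires an additional idea that you have not supplied.

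In short: your sketch is consistent with the paper's treatment (cite \cite{haa}), your computations are correct as far as they go, but there is a genuine gap at the step you yourself flag --- the derivation of the explicit bound --- and closing it is the actual work.
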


\begin{remark}\label{rem:}
Surprisingly enough we were not able to find the estimates \eqref{eq:g_unif1} and \eqref{eq:g_unif2} for noninteger values of $\alpha$ and $\beta$ in the literature. Numerically both seem to be true for noninteger values of $\alpha$ and $\beta$.
\end{remark}

Let us also mention the  following Bernstein-type inequality obtained recently by Haagerup and Schlichtkrull in \cite{haa}.

\begin{theorem}[\cite{haa}]\label{th:haag}
There is a constant $C<12$ such that 
\be\label{eq:g_unif3}
\big|(1-x^2)^{1/4}\g_n^{(\alpha,\beta)}(x)\big| \le \frac{C}{\sqrt[4]{2n+\alpha+\beta+1}}
\ee
for all $x\in [-1,1]$, $\alpha$, $\beta\ge 0$ and $n\in\N_0$.
\end{theorem}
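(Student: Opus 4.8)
The plan is to pass to the trigonometric (Liouville) normal form, where $(1-x^2)^{1/4}\g_n^{(\alpha,\beta)}$ becomes a constant multiple of a solution of a Schr\"odinger equation, and then to control that solution's amplitude by a Sonine-type energy together with the exact $L^2$ normalization; the genuinely hard part is the behaviour near the turning points, which is where the weight $(1-x^2)^{1/4}$ and the exponent $-\tfrac14$ (rather than $-\tfrac12$) originate.

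First I would substitute $x=\cos\theta$, $\theta\in(0,\pi)$, and set $u(\theta):=(\sin\tfrac\theta2)^{\alpha+1/2}(\cos\tfrac\theta2)^{\beta+1/2}P_n^{(\alpha,\beta)}(\cos\theta)$. The Jacobi differential equation then becomes the one-dimensional Schr\"odinger equation
\[ u''(\theta)+Q(\theta)\,u(\theta)=0,\qquad Q(\theta)=N^2+\frac{\tfrac14-\alpha^2}{4\sin^2(\theta/2)}+\frac{\tfrac14-\beta^2}{4\cos^2(\theta/2)},\quad N:=n+\tfrac{\alpha+\beta+1}2, \]
so that $2N=2n+\alpha+\beta+1$. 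Unwinding \eqref{eq:g} gives $(1-x^2)^{1/4}\g_n^{(\alpha,\beta)}(x)=\sqrt2\,C_{n,\alpha,\beta}\,u(\theta)$ with $C_{n,\alpha,\beta}=(\tfrac{\Gamma(n+1)\Gamma(n+\alpha+\beta+1)}{\Gamma(n+\alpha+1)\Gamma(n+\beta+1)})^{1/2}$, so \eqref{eq:g_unif3} is equivalent to the amplitude bound $\sup_\theta|u(\theta)|\le C'\,C_{n,\alpha,\beta}^{-1}(2N)^{-1/4}$. Pulling \eqref{eq:02} through the same substitution provides the exact normalization $C_{n,\alpha,\beta}^2\int_0^\pi u(\theta)^2\,d\theta=\tfrac1{2N}$, which is the quantitative input fixing the overall scale.

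Next I would introduce the Sonine energy $S:=u^2+(u')^2/Q$ on $\{Q>0\}$, for which the equation yields the clean identity $S'=-(Q'/Q^2)(u')^2$; hence $S$ is monotone between critical points of $Q$ and is extremized exactly where $Q$ is. When $0\le\alpha,\beta<\tfrac12$ both correction terms are nonnegative, so $Q\ge N^2>0$ on all of $(0,\pi)$ and $Q$ has a single interior minimum $\theta_0$; then $S$ attains its maximum at $\theta_0$ and $\sup_\theta u^2\le S(\theta_0)$. Comparing $u$ on each half-period with the constant-coefficient solutions of $v''+Q(\theta_0)v=0$ via Sturm's theorem and feeding in the $L^2$ identity bounds $S(\theta_0)$ and gives the (even stronger) decay $\sup_\theta|C_{n,\alpha,\beta}u|=O(N^{-1/2})$ in this regime. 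By the symmetry \eqref{eq:symmetry} we may assume $\alpha\le\beta$, so the remaining work is the case $\beta\ge\tfrac12$.

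The real difficulty is $\max(\alpha,\beta)\ge\tfrac12$: at each endpoint where the parameter is $\ge\tfrac12$ the corresponding correction term is negative, $Q\to-\infty$, and $\{Q>0\}$ is cut off by a turning point $\theta_\star$ where $Q(\theta_\star)=0$. There the bare Sonine function degenerates, since $(u')^2/Q$ blows up as $Q\to0^+$, and it is exactly in these turning-point layers that the factor $(1-x^2)^{1/4}$ and the reduced rate $N^{-1/4}$ come into play. I would resolve this by an Airy-type comparison: near $\theta_\star$ freeze the linearization and compare $u$ with the solution of $w''=(\theta-\theta_\star)\,Q'(\theta_\star)\,w$, whose peak amplitude scales like $|Q'(\theta_\star)|^{-1/6}$, then match this envelope to the interior WKB amplitude calibrated by the $L^2$ identity; the two regimes join to give the uniform $N^{-1/4}$ bound. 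The main obstacle is twofold: first, making the turning-point matching uniform over all $\alpha,\beta\ge0$ and $n\in\N_0$, in particular in the degenerate case $n=0$ (and small $n$), where the turning points can lie far apart and no oscillatory bulk exists; and second, tracking every multiplicative constant through the Liouville change of variables, the Sonine comparison, and the Airy matching tightly enough to land below the explicit threshold $C<12$. A route that bypasses the ODE entirely is to begin from a Dirichlet--Mehler/Laplace-type integral representation of $P_n^{(\alpha,\beta)}(\cos\theta)$ and apply van der Corput's lemma; this reproduces both the weight $(1-x^2)^{1/4}$ and the edge exponent automatically, though again the sharp constant is the delicate point.
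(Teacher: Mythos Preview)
The paper does not prove this theorem at all: it is quoted verbatim from Haagerup and Schlichtkrull \cite{haa} and used as a black box (see the sentence immediately preceding the statement and the citation in the theorem header). So there is no ``paper's own proof'' to compare against.

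That said, your outline is not a proof either, and you say so yourself. The Liouville substitution and the Sonine energy $S=u^2+(u')^2/Q$ are the right starting point, and the observation that for $\max(\alpha,\beta)<\tfrac12$ the potential stays positive so the Sonine argument closes by itself is correct. But everything you identify as ``the real difficulty'' is exactly where the content of the theorem lies, and you have only described a strategy (Airy matching near the turning points, calibrated by the $L^2$ normalization), not carried it out. In particular: (i) the uniformity in $\alpha,\beta$ as both can run to infinity is genuinely delicate, since the turning point can sit anywhere in $(0,\pi)$ and $|Q'(\theta_\star)|$ ranges over several scales; (ii) the small-$n$ cases do not fit the WKB/Airy picture and need a separate argument; and (iii) nothing in your sketch pins down any explicit constant, let alone $C<12$. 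The alternative route via a Laplace/Dirichlet--Mehler integral plus van der Corput has the same status: plausible, but with no indication of how the constants are controlled. As written this is a plan, not a proof; to match the claimed statement (including $C<12$) you would essentially have to reproduce the analysis in \cite{haa}.
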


A few remarks are in order.

\begin{remark}
 \begin{enumerate}[label=(\roman*), ref=(\roman*), leftmargin=*, widest=iii] 
\item The optimal value for the constant $C$ in \eqref{eq:g_unif3} is not known.
\item The decay rate $n^{-1/4}$ in \eqref{eq:g_unif3} is optimal as $\alpha$ and $\beta$ tend to infinity. However, for fixed $\alpha$ and $\beta$, the decay rate is $n^{-1/2}$ as $n\to \infty$ (see \eqref{eq:Bernstein}). 
\item It was observed in \cite{haa} that \eqref{eq:g_unif3} implies the following interesting estimate for the matrix entries of $\varrho_d(A)$
\[
|\sin(2\theta)|^{1/2}\big|\varrho_d(A(\phi,\theta,\varphi))_{j,k}\big| \le Cd^{-1/4},
\]
which provides the uniform decay $d^{-1/4}$ for the matrix coefficients, where $d$ is the dimension of the representation $\varrho_d$. 
\end{enumerate}
\end{remark}

\section{Uniform weighted estimates for Jacobi polynomials}\label{sec:new} 
The main aim of this section is to prove the following inequality.

\begin{theorem}\label{th:VImain}
The Bernstein-type estimate
\be\label{eq:Bern_a0}
\left(\frac{1+x}{2}\right)^{\beta/2} \Big|P_n^{(\alpha,\beta)}\left(x\right)\Big| \le \binom{n+\alpha}{n},\quad x\in[-1,1],
\ee
holds for all $n\in\N_0$, $\beta\ge0$
and $\alpha\ge \beta - \floor{\beta}$,
where $\floor{\,.\,}$ is the usual floor function.
 
Equivalently, in terms of Meixner polynomials \eqref{K11}, we have
\be
c^{(n+x)/2} \big|M_n(x;\beta,c)\big|\le1,\qquad x\ge n,
\ee
where $0<c<1$ and $\beta\ge x-\floor{x}+1$.
\end{theorem}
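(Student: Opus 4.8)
The plan is to prove the Jacobi form \eqref{eq:Bern_a0} directly; the Meixner reformulation then follows with no extra work, since \eqref{K4} together with \eqref{eq:normal} turns $c^{(n+x)/2}|M_n(x;\beta,c)|$ into exactly the left-hand side of \eqref{eq:Bern_a0} evaluated at $2c-1$. Write $\phi_n^{(\alpha,\beta)}(x):=\left(\frac{1+x}{2}\right)^{\beta/2}P_n^{(\alpha,\beta)}(x)$, so that by \eqref{eq:normal} we have $\phi_n^{(\alpha,\beta)}(1)=\binom{n+\alpha}{n}$, while $\phi_n^{(\alpha,\beta)}(-1)=0$ for $\beta>0$. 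Thus \eqref{eq:Bern_a0} is the assertion that the maximum of $|\phi_n^{(\alpha,\beta)}|$ on $[-1,1]$ is attained at the right endpoint. Since the hypothesis reads $\alpha\ge\beta-\floor{\beta}$, I would split $\beta=\beta_0+j$ with $\beta_0=\beta-\floor{\beta}\in[0,1)$ and $j=\floor{\beta}\in\N_0$, note that $\alpha\ge\beta_0$, and induct on $j$, keeping $\alpha$ and $\beta_0$ fixed (the fractional part, hence the standing hypothesis $\alpha\ge\beta_0$, is preserved under $\beta\mapsto\beta+1$).

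For the base case $j=0$ with $\beta_0=0$ the estimate is exactly the classical endpoint bound \eqref{eq:Cunif} (valid since $\alpha\ge0$). The inductive engine is the contiguous relation \eqref{eq:rec02}: solving it for $\frac{1+x}{2}P_n^{(\alpha,\beta+1)}$ and multiplying by $\left(\frac{1+x}{2}\right)^{\beta/2}$ gives the clean identity
\begin{equation*}
\left(\frac{1+x}{2}\right)^{1/2}\phi_n^{(\alpha,\beta+1)}(x)=\frac{n+1}{2n+\alpha+\beta+2}\,\phi_{n+1}^{(\alpha,\beta)}(x)+\frac{n+\beta+1}{2n+\alpha+\beta+2}\,\phi_n^{(\alpha,\beta)}(x).
\end{equation*}
Both coefficients are positive, and feeding in the inductive bounds $|\phi_{n+1}^{(\alpha,\beta)}|\le\binom{n+1+\alpha}{n+1}$ and $|\phi_n^{(\alpha,\beta)}|\le\binom{n+\alpha}{n}$, together with $\binom{n+1+\alpha}{n+1}=\frac{n+\alpha+1}{n+1}\binom{n+\alpha}{n}$, makes the right-hand side collapse to exactly $\binom{n+\alpha}{n}$. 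This already yields the \emph{weighted} bound $\left(\frac{1+x}{2}\right)^{1/2}|\phi_n^{(\alpha,\beta+1)}(x)|\le\binom{n+\alpha}{n}$ for all $n$ and all $x\in[-1,1]$.

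The main obstacle is the last step: removing the spurious factor $\left(\frac{1+x}{2}\right)^{1/2}\le1$ to obtain the sharp, unweighted bound $|\phi_n^{(\alpha,\beta+1)}|\le\binom{n+\alpha}{n}$ that the induction must propagate. Equivalently, one must show that $|\phi_n^{(\alpha,\beta+1)}|$ has no interior maximum exceeding its endpoint value at $x=1$. I would establish this by a Sturm--Sonin (energy-function) argument: transform the Jacobi differential equation into the self-adjoint equation satisfied by $\phi_n^{(\alpha,\beta+1)}$, form the Sonin function $S=(\phi_n^{(\alpha,\beta+1)})^2+q(x)\,((\phi_n^{(\alpha,\beta+1)})')^2$ with $q>0$ read off from that equation, and check that $S'$ has a fixed sign on a neighbourhood of the right endpoint, so that the successive relative maxima of $(\phi_n^{(\alpha,\beta+1)})^2$ increase toward $x=1$; the a priori weighted bound just proved then pins the last interior maximum below $\binom{n+\alpha}{n}^2$, forcing the global maximum to sit at $x=1$. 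The delicate point, and the reason the elementary recurrence alone does not suffice, is precisely the verification of the sign of $S'$ up to the degenerate endpoint; the same monotonicity argument simultaneously disposes of the remaining base cases $\beta_0\in(0,1)$, where the weight is genuinely fractional and \eqref{eq:Cunif} does not apply.
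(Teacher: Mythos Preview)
Your recurrence identity for $\left(\tfrac{1+x}{2}\right)^{1/2}\phi_n^{(\alpha,\beta+1)}$ is correct, and the base case $\beta_0\in[0,1)$ is in fact \emph{immediate} from \eqref{eq:Cunif}: since $\alpha\ge\beta_0$ forces $\max(\alpha,\beta_0)=\alpha$, one has $\left(\tfrac{1+x}{2}\right)^{\beta_0/2}|P_n^{(\alpha,\beta_0)}(x)|\le|P_n^{(\alpha,\beta_0)}(x)|\le\binom{n+\alpha}{n}$, with no need for any Sonin argument there.

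The genuine gap is precisely where you locate it, and your proposed fix does not close it. A careful Sonin--P\'olya analysis of $y=\phi_n^{(\alpha,\beta+1)}$ (carried out in the paper as Lemma~\ref{lem:6.04}) produces \emph{two} turning points $-1\le x_0<x_1<1$: the Sonin function increases on $(x_1,1]$, so all relative maxima of $y^2$ there are dominated by $y(1)^2=\binom{n+\alpha}{n}^2$; but it \emph{decreases} on $(x_0,x_1)$ from $+\infty$, and there are no maxima left of $x_0$. Consequently the global competitor is the \emph{leftmost} local maximum $a\in(x_0,x_1)$, not the one nearest $x=1$. At such a point your weighted bound gives only $y(a)^2\le\frac{2}{1+a}\binom{n+\alpha}{n}^2$, and since $x_0\to-1$ as $n\to\infty$ (with $\alpha,\beta$ fixed), the factor $\frac{2}{1+a}$ is in general much larger than $1$. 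So the weighted bound cannot ``pin'' that maximum, and the induction does not close.

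This is exactly why the paper's own Sonin--P\'olya proof needs an \emph{independent} estimate near $x=-1$, namely \eqref{eq:g_unif1}, to cover the interval $[-1,x_1]$ (Lemmas~\ref{lem:6.05}--\ref{lem:6.06}); since \eqref{eq:g_unif1} is only known for $\alpha,\beta\in\N_0$, that argument does not reach the full range. For the stated range $\alpha\ge\beta-\lfloor\beta\rfloor$ the paper uses a completely different mechanism: a hypergroup-type product formula for the parabolic biangle polynomials (Theorem~\ref{thm:prodf-la}) yields $|R_{n,k}^{\alpha,\beta}|\le1$ via Lemma~\ref{lem:prod_est}, and specialization gives \eqref{eq:Bern_a0} directly, without recurrences or Sonin analysis.
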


The proof is based on the product formula for biangle polynomials. More precisely, let
\be\label{eq:biangle}
\cB=\{(x_1,x_2)|\, 0\le x_2^2\le x_1\le 1\}
\ee
be the parabolic biangle. Following \cite{koII,koIII}, let $R_{n}^{(\alpha,\beta)}$  denote the Jacobi polynomials normalized by $R_{n}^{(\alpha,\beta)} (1) =1$, that is,
\be\label{eq:normat1}
R_{n}^{(\alpha,\beta)}(x) = \frac{P_{n}^{(\alpha,\beta)}(x)}{P_{n}^{(\alpha,\beta)}(1)}.
\ee
For $\alpha,\beta>-1$ and $n,k\in\N_0$ such that $k\le n$, define the {\em parabolic biangle polynomials} (see, e.g., \cite[\S 2.6.1]{dunxu} and \cite[\S 3.3]{ko75}, however, with a different notation)
\be\label{eq:RmnAB}
R_{n,k}^{\alpha,\beta}(x_1,x_2)=R_{n-k}^{(\alpha,\beta+k+1/2)}(2x_1-1)\cdot x_1^{k/2} R_{k}^{(\beta,\beta)}(x_2/\sqrt{x_1}),\quad (x_1,x_2)\in \cB.
\ee

Clearly, these functions are polynomials in $x_1$ and $x_2$ of degree $n$. Moreover, for fixed $\alpha$ and $\beta$ they are orthogonal with respect to the measure
\[
(1-x_1)^{\alpha}(x_1-x_2^2)^\beta\, dx_1dx_2.
\] 
For certain values of $\alpha$ and $\beta$ the parabolic biangle polynomials have an
interpretation as spherical functions for a Gelfand pair $(K,M)$, where $K$ is a compact
group and $M$ is a closed subgroup. For these values of the parameters, the general theory of spherical
functions on Gelfand pairs yields the existence of suitable product formulas and related
hypergroup structures. The product formula in the general case was established in \cite[Thm.~2.1]{kosw}:

\begin{theorem}\label{thm:prodf-la}
Let $\alpha\ge \beta+1/2\ge 0$. Let also $0\le |x_2|\le x_1\le 1$ and $0\le |y_2|\le y_1\le 1$. If $(x_1,x_2),(y_1,y_2)\in\cB\setminus\{(0,0)\}$, then the parabolic biangle polynomials satisfy the following hypergroup-type product formula:
\begin{align}\label{eq:prodf-la}
R_{n,k}^{\alpha,\beta}(x_1^2,x_2)\cdot R_{n,k}^{\alpha,\beta}(y_1^2,y_2) = \int_{I\times J^3}R_{n,k}^{\alpha,\beta}(E^2,EG)d\mu^{\alpha,\beta}(r_1,\psi_1,\psi_2,\psi_3),
\end{align}
where $I=[0,1]$, $J=[0,\pi]$,
\begin{align*}
D &= D(x,y;r,\psi) = xy + (1-x^2)^{1/2}(1-y^2)^{1/2}r \cos \psi,\\
E &= E(x_1,y_1;r_1,\psi_1) \\
&= \big(x_1^2 y_1^2 + (1-x_1^2)(1-y_1^2)r_1^2 
+ 2x_1y_1 (1-x_1^2)^{1/2}(1-y_1^2)^{1/2}r_1 \cos \psi_1\big)^{1/2},\\
G & = 
    D\left(  \frac{D(x_1,y_1;r_1,\psi_1)}{E(x_1,y_1;r_1,\psi_1)},D(\frac{x_2}{x_1},
      \frac{y_2}{y_1};1,\psi_2); 1,\psi_3\right), 
\end{align*}
and 
\begin{align*}
&dm^\beta(\psi) =
\frac{\Gamma(\beta+\frac32)}{\Gamma(\frac12)\Gamma(\beta+1)}(\sin
\psi)^{2\beta+1} d\psi, \\
&dm^{-1}(\psi) = d\big[ \tfrac12 \delta_0(\psi) + \tfrac12
\delta_{\pi}(\psi)\big],\\
&dm^{\alpha,\beta}(r,\psi) = \frac{2 \Gamma(\alpha+1)}
     {\Gamma(\alpha-\beta)\Gamma(\beta + 1)}
     (1-r^2)^{\alpha-\beta-1}
     r^{2\beta+1} dr \,dm^{\beta-\frac12}(\psi),\\
&dm^{\alpha,\alpha}(r,\psi)  = \frac{\Gamma(\alpha+1)} {\Gamma(\alpha +
\frac12)\Gamma(\frac12)}
     (\sin \psi)^{2\alpha} \, d(\delta_1)(r)   \,d\psi,\\
&d\mu^{\alpha,\beta}(r,\psi_1,\psi_2,\psi_3) =    
     dm^{\beta-\frac12}(\psi_3) \cdot dm^{\beta-\frac12}(\psi_2)
                          \cdot   
               dm^{\alpha,\beta+\frac12}(r,\psi_1)
\end{align*}
are positive probability measures.
\end{theorem}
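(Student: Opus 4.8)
The plan is to establish \eqref{eq:prodf-la} first on a discrete set of special parameter values, where $R_{n,k}^{\alpha,\beta}$ has a concrete realization as a zonal spherical function, and then to propagate it to the whole region $\alpha\ge\beta+\half\ge0$ by analytic continuation in $(\alpha,\beta)$. For the special values (certain integers and half-integers fixed by the dimensions of the underlying homogeneous space) the quadratic substitution relating the biangle polynomials \eqref{eq:RmnAB} to the disk polynomials \eqref{eq:RmnA} exhibits $R_{n,k}^{\alpha,\beta}$ as a spherical function for a compact Gelfand pair $(K,M)$. Every spherical function $\phi$ of such a pair obeys the elementary product formula $\phi(g)\phi(h)=\int_M\phi(g\,m\,h)\,dm$, and pushing the normalized invariant measure forward to the double-coset coordinates $(r_1,\psi_1,\psi_2,\psi_3)$ produces an identity of exactly the shape \eqref{eq:prodf-la}.

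The geometric content is the observation that $D$ and $E$ are the real part and the modulus of the single complex quantity $\zeta=x_1y_1+\sqrt{(1-x_1^2)(1-y_1^2)}\,r\E^{\I\psi}$, which is precisely the combined argument occurring in the product formula for the disk polynomials \eqref{eq:RmnA}. Using $R_{n,k}^{\alpha,\beta}(E^2,EG)=R_{n-k}^{(\alpha,\beta+k+\half)}(2E^2-1)\,E^{k}R_{k}^{(\beta,\beta)}(G)$, the identity \eqref{eq:prodf-la} splits into a radial part, the Jacobi (disk) product formula for $R_{n-k}^{(\alpha,\beta+k+\half)}$ carried by the measure $dm^{\alpha,\beta+\half}(r_1,\psi_1)$, and an angular part, the Gegenbauer product formula for $R_{k}^{(\beta,\beta)}$ carried by $dm^{\beta-\half}(\psi_2)$ and $dm^{\beta-\half}(\psi_3)$; the nested definition of $G$ is exactly the bookkeeping needed to feed the output angle of the radial step into the angular steps. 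One must then verify that the push-forward of the invariant measure is precisely the product measure $d\mu^{\alpha,\beta}$ and that it is a positive probability measure throughout the region.

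For fixed $n,k$ the two sides of \eqref{eq:prodf-la}, through the hypergeometric representations of their Jacobi and Gegenbauer factors, are analytic in $(\alpha,\beta)$ on the open region $\alpha>\beta+\half>0$, as is the weight $(1-r^2)^{\alpha-\beta-1}r^{2\beta+1}$ of the radial measure. Holding one parameter at its group values and applying a Carlson-type uniqueness theorem in the other (the relevant functions being of at most exponential growth), the identity extends from the special values to all admissible $(\alpha,\beta)$; the degenerate boundary cases are recovered as limits, the edge $\beta=-\half$ concentrating the angular measures $dm^{-1}$ onto $\half\delta_0+\half\delta_\pi$ and the edge $\alpha=\beta+\half$ collapsing the radial integration onto $\delta_1(r)$, each requiring a short continuity argument.

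I expect the main obstacle to be the explicit change-of-variables establishing that integrating the invariant measure over the fibre reproduces exactly $dm^{\beta-\half}(\psi_3)\,dm^{\beta-\half}(\psi_2)\,dm^{\alpha,\beta+\half}(r_1,\psi_1)$, together with the positivity bookkeeping needed to certify that $d\mu^{\alpha,\beta}$ stays a probability measure across the full parameter range. A purely analytic alternative that avoids the group model would instead build \eqref{eq:prodf-la} by integrating the disk addition formula \eqref{eq:addf-la} over its angular variables and transporting the result through the quadratic map, but the nested argument $G$ makes that route considerably more tedious.
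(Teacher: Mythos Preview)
The paper does not prove Theorem~\ref{thm:prodf-la}; it is quoted verbatim from \cite[Thm.~2.1]{kosw} and used as a black box to derive the Bernstein-type estimate. So there is no ``paper's own proof'' to compare against here.

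That said, your sketch is worth commenting on relative to the actual argument in \cite{kosw}. The decisive observation you make in your second paragraph --- that \eqref{eq:prodf-la} factors into a radial piece governed by the one-variable Jacobi product formula for $R_{n-k}^{(\alpha,\beta+k+1/2)}$ under $dm^{\alpha,\beta+1/2}(r_1,\psi_1)$, and an angular piece governed by the Gegenbauer product formula for $R_k^{(\beta,\beta)}$ under $dm^{\beta-1/2}(\psi_2)\,dm^{\beta-1/2}(\psi_3)$, with $G$ encoding the nesting --- is exactly the engine of the proof in \cite{kosw}. However, your first and third paragraphs take an unnecessary detour. The one-variable product formulas (Koornwinder's for Jacobi polynomials with $\alpha\ge\beta\ge-\tfrac12$, and Gegenbauer's for $\beta\ge-\tfrac12$) are already known analytically for the full continuous parameter range, so once you have the factorisation there is nothing left to analytically continue: the group-theoretic realisation for special integer parameters is motivation, not an ingredient, and Carlson's theorem is not invoked. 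Your remark about a ``quadratic substitution relating the biangle polynomials to the disk polynomials'' is also misleading --- the biangle polynomials are not disk polynomials in disguise, and the disk addition formula \eqref{eq:addf-la} plays no role in the proof of \eqref{eq:prodf-la}. The boundary cases ($\beta=-\tfrac12$, $\alpha=\beta+\tfrac12$) are handled in \cite{kosw} by the degenerate one-variable formulas directly, not by a limiting argument.
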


Before proving Theorem \ref{th:VImain}, we need 
 the following simple fact.

\begin{lemma}\label{lem:prod_est}
Let $X$ be a compact topological space and $X_0$ a dense subset of $X$. Suppose that $\phi\in C(X,\R)$
such that for each $x,y\in X_0$ there is a (positive) probability Borel measure $\mu_{x,y}$
on $X$ with the property that
\begin{equation}
\phi(x)\phi(y)=\int_X \phi(z)\,d\mu_{x,y}(z).
\label{eq:prod1}
\end{equation} 
Then 
\be
\max_{x\in X}|\phi(x)|\le 1.
\ee
\end{lemma}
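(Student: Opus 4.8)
The plan is to exploit compactness together with the product formula~\eqref{eq:prod1} to pin down where $|\phi|$ attains its maximum and then derive a contradiction unless that maximum is at most~$1$. Since $X$ is compact and $\phi$ is continuous and real-valued, $|\phi|$ attains a maximum on $X$; set $M:=\max_{x\in X}|\phi(x)|$ and pick $x_\ast\in X$ with $|\phi(x_\ast)|=M$. The whole point of the argument is that the probability-measure identity forces the value of $|\phi|$ at a product point to be controlled by the \emph{average} of $|\phi|$, which cannot exceed the maximum $M$; iterating this at the diagonal $x=y=x_\ast$ then shows $M^2\le M$, whence $M\le1$.

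First I would reduce to points of the dense set $X_0$, since \eqref{eq:prod1} is only assumed there. Because $\phi$ is continuous and $X_0$ is dense, we have $\sup_{x\in X_0}|\phi(x)|=\max_{x\in X}|\phi(x)|=M$, so it suffices to bound $|\phi(x)|$ for $x\in X_0$. For any $x,y\in X_0$, taking absolute values in \eqref{eq:prod1} and using that $\mu_{x,y}$ is a positive probability measure gives
\be\label{eq:plan-est}
|\phi(x)|\,|\phi(y)|=\left|\int_X\phi(z)\,d\mu_{x,y}(z)\right|\le\int_X|\phi(z)|\,d\mu_{x,y}(z)\le M\int_X d\mu_{x,y}(z)=M.
\ee
Thus $|\phi(x)|\,|\phi(y)|\le M$ for all $x,y\in X_0$.

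Finally I would take a sequence $x_j\in X_0$ with $|\phi(x_j)|\to M$, which exists by density and continuity, and apply \eqref{eq:plan-est} with $x=y=x_j$ to get $|\phi(x_j)|^2\le M$. Passing to the limit yields $M^2\le M$, and since $M\ge0$ this forces $M\le1$, which is exactly the claim. I expect the only genuinely delicate point to be the passage from the hypothesis on the dense subset $X_0$ to the maximum over all of $X$; this is where continuity of $\phi$ and density of $X_0$ are essential, and one must be careful that the probability-measure property of $\mu_{x,y}$ (total mass exactly~$1$, positivity) is what makes the averaging inequality in \eqref{eq:plan-est} work. Everything else is a short soft-analysis argument with no computation.
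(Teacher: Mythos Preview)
Your proof is correct and follows essentially the same approach as the paper: use the diagonal case $x=y\in X_0$ of the product formula together with the fact that $\mu_{x,x}$ is a probability measure to obtain $\phi(x)^2\le M$, then invoke continuity and density of $X_0$ to conclude $M^2\le M$. The paper's version is slightly terser (it does not pass through a sequence explicitly), but the argument is the same.
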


\begin{proof}
Let $M:=\max_{x\in X}|\phi(x)|\ge 0$. Then, from \eqref{eq:prod1} we see
\[
\phi(x)^2 = \phi(x)\phi(x)=\int_X \phi(z)\,d\mu_{x,x}(z) \le M, \quad x\in X_0.
\]
Since $\phi$ is continuous and $X_0$ is dense in $X$, we infer $M^2\le M$, that is, $M\le 1$.
\end{proof}

\begin{remark}
In the context of hypergroups Lemma \ref{lem:prod_est}
is well known as an inequality for bounded characters on a commutative
hypergroup, see for instance the paper by Dunkl \cite[Prop.~2.2(2)]{d73}.
\end{remark}

\begin{proof}[Proof of Theorem \ref{th:VImain}] Using the product formula and Lemma \ref{lem:prod_est}, we immediately conclude that
\be
\big|R_{n,k}^{\alpha,\beta}(x_1^2,x_2)\big|\le 1, \quad (x_1,x_2)\in\cB,
\ee
for all $k\le n$ and $\alpha\ge \beta+1/2\ge 0$. By \eqref{eq:Cunif} and \eqref{eq:normat1} we know
\[
\big|R_{k}^{(\beta,\beta)}(x_2/\sqrt{x_1})\big| \le 1, \quad (x_1,x_2)\in\cB\setminus\{(0,0)\}
\]
for all $k\in\N_0$ and $\beta\ge -1/2$, and hence we conclude (replacing $\beta+1/2$ by $\beta$)
\be
\left(\frac{x+1}{2}\right)^{k/2}\abs{P_{n-k}^{(\alpha,\beta+k)}(x)} \le \binom{n-k+\alpha}{n-k},\quad x\in[-1,1],
\ee
for all $k\le n$ and $\alpha\ge \beta\ge 0$.  Since $n\in \N_0$ is arbitrary, we can replace $n-k$ by $n\in\N_0$. Moreover, choosing $\beta\in [0,1)$ and noting that $k\in\N_0$ is arbitrary, we finally end up with
\[
\left(\frac{x+1}{2}\right)^{\floor{\beta}/2}\abs{P_{n}^{(\alpha,\beta)}(x)} \le \binom{n+\alpha}{n},\quad x\in[-1,1],
\]
which holds for all $n\in\N_0$ and $\alpha\ge \beta-\floor{\beta}$. 
Since $(x+1)/2\le 1$ for all $x\in[-1,1]$, this completes the proof.
\end{proof}

\begin{remark}
 In fact, \cite[Thm.~2.1]{kosw} gives \eqref{eq:prodf-la} also for
the case $(x_1,x_2)=(0,0)$ or $(y_1,y_2)=(0,0)$, with a somewhat simpler
measure on the right-hand side. Hence we might have worked with a version
of Lemma \ref{lem:prod_est} without the restriction to a dense subset.
\end{remark}
 
We would like to finish this section with the following remark. 
We have two more proofs of Theorem \ref{th:VImain}\iflong{} (see Appendix~\ref{secapp})\fi, however, for a smaller set of parameters $\alpha$ and $\beta$. More precisely, using the addition formula for disk polynomials \cite{koIII}, one can prove \eqref{eq:Bern_a0} for all $\alpha\ge 0$ and $\beta\in\N_0$. The third proof is based on \eqref{eq:g_unif1} and hence inherits the restriction $\alpha$ and $\beta\in \N_0$. It uses the Sonin--P\'olya theorem\footnote{In the literature Sonin is also written as Sonine.} \cite[footnote to Theorem 7.31.1]{sz} and leads to the following result:

 \begin{theorem}\label{th:5.12}
Inequality \eqref{eq:Bern_a0} holds for all indices $\alpha$, $\beta$ for which \eqref{eq:g_unif1} holds.
\end{theorem}

We firmly expect that \eqref{eq:g_unif1} holds for all $x\in [-1,1]$ and $\alpha,\beta\ge 0$, which in particular would imply \eqref{eq:Bern_a0} for all $\alpha,\beta\ge 0$.

\section{Dispersion estimates for the evolution group $\E^{-\I tH_\alpha}$}\label{sec:V}

It turns out that Theorem \ref{thm:explicit} (see also \eqref{eq:explicit2}) establishes a connection between Bernstein-type inequalities and dispersion estimates for the discrete Laguerre operators $H_\alpha$.  
In this section we shall present some $\ell^1\to \ell^\infty$ decay estimates for the evolution group $\E^{-\I tH_\alpha}$ based on Bernstein-type inequalities from the previous sections. 

First, notice that \eqref{eq:explicit2} can be rewritten in terms of the function $\g_n^{(\alpha,\beta)}$ introduced in \eqref{eq:g}:
\begin{align}\label{eq:4.11}
\big|\E^{-\I tH_\alpha}(n,m)\big|
	= \frac {1}{\sqrt{1+ t^2}} \left|\g_n^{(\alpha,m-n)}\left(\frac{t^2-1}{t^2+1}\right)\right|,\quad m\ge n.
\end{align}
Hence the estimate \eqref{eq:g_unif1} immediately implies

\begin{theorem}\label{thm:decay1}
Let $\alpha\in\N_0$. Then the following estimate
\be\label{eq:decay}
\|\E^{-\I tH_\alpha}\|_{\ell^1\to \ell^\infty} \le \frac{1}{\sqrt{1+t^2}},\quad t\in\R,
\ee
holds. Moreover, in the case $\alpha=0$, the inequality can be replaced by equality.
\end{theorem}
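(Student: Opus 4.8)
The plan is to exploit the elementary fact that the $\ell^1\to\ell^\infty$ operator norm of a matrix coincides with the supremum of the moduli of its entries. First I would record the identity
\[
\|\E^{-\I tH_\alpha}\|_{\ell^1\to\ell^\infty}
=\sup_{n,m\in\N_0}\big|\E^{-\I tH_\alpha}(n,m)\big|,
\]
which follows from the duality between $\ell^1$ and $\ell^\infty$: for $u\in\ell^1$ one has $|(\E^{-\I tH_\alpha}u)_n|\le\big(\sup_m|\E^{-\I tH_\alpha}(n,m)|\big)\,\|u\|_{\ell^1}$, and the bound is saturated in the limit on the basis sequences $\delta_m$. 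Thus the whole statement reduces to a uniform pointwise estimate on the kernel.

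Next I would use the symmetry $\E^{-\I tH_\alpha}(n,m)=\E^{-\I tH_\alpha}(m,n)$ from \eqref{eq:explicit} to restrict attention to $m\ge n$, and then apply the representation \eqref{eq:4.11}, which expresses the modulus of the kernel as $(1+t^2)^{-1/2}\big|\g_n^{(\alpha,m-n)}(x)\big|$ with $x=(t^2-1)/(t^2+1)\in[-1,1)$. Since $\alpha\in\N_0$ and $m-n\in\N_0$, the pair $(\alpha,m-n)$ lies exactly in the range of parameters for which the $\SU(2)$-estimate \eqref{eq:g_unif1} is available, so that $\big|\g_n^{(\alpha,m-n)}(x)\big|\le1$ for all $x\in[-1,1]$. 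Combining these two facts gives $\big|\E^{-\I tH_\alpha}(n,m)\big|\le(1+t^2)^{-1/2}$ uniformly in $n,m$, which together with the norm identity is precisely \eqref{eq:decay}.

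For the sharpness assertion in the case $\alpha=0$, it suffices to exhibit a single entry attaining the bound. I would take $n=m=0$ and invoke \eqref{eq:erd3} (equivalently, use $P_0^{(0,0)}\equiv1$, whence $\g_0^{(0,0)}\equiv1$), which yields $\E^{-\I tH_0}(0,0)=(1+\I t)^{-1}$ and hence $\big|\E^{-\I tH_0}(0,0)\big|=(1+t^2)^{-1/2}$. This forces the inequality \eqref{eq:decay} to be an equality when $\alpha=0$.

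I do not expect a genuine obstacle here, since the analytic substance has already been absorbed into the kernel formula \eqref{eq:4.11} and, above all, into the uniform bound \eqref{eq:g_unif1} on the normalized functions $\g_n^{(\alpha,\beta)}$. The only point requiring a little care is to confirm that, after the reduction to $m\ge n$, the relevant second parameter $\beta=m-n$ is a nonnegative integer, so that \eqref{eq:g_unif1} is legitimately applicable; once this is noted, both the estimate and its sharpness for $\alpha=0$ follow at once.
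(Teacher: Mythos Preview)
Your proposal is correct and follows essentially the same route as the paper: reduce to the entrywise supremum, apply the representation \eqref{eq:4.11} together with the $\SU(2)$-bound \eqref{eq:g_unif1} (valid since $\alpha,m-n\in\N_0$), and for $\alpha=0$ test on the $(0,0)$ entry. The paper states this more tersely but the argument is the same.
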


\begin{proof}
To prove the last claim it suffices to note that 
\[
\|\E^{-\I tH_0}\|_{\ell^1\to \ell^\infty} \ge \big|\E^{-\I tH_0}(0,0)\big| = \frac {1}{\sqrt{1+ t^2}}
\]
for all $t\in\R$.
\end{proof}

\begin{remark}
\begin{enumerate}[label=(\roman*), ref=(\roman*), leftmargin=*, widest=ii]
\item
The case $\alpha=0$ was proven in \cite{kt}. Using a different approach, a weaker estimate in the case $\alpha=0$ was obtained in \cite{ks15b}.
\item
Using Lemma~\ref{lem:haa1} (see also \eqref{eq:g_unif1}), we get the somewhat stronger estimate
\begin{align*}
\big|\E^{-\I tH_\alpha}(n,m)\big| \le \frac {1}{\sqrt{1+ t^2}} 
 \left(\frac{(n+1)(m+\alpha+1)}{(m+1)(n+\alpha+1)}\right)^{1/4},
	\end{align*}	
which holds for all $m\ge n$, $\alpha\in\N_0$ and $t\in\R$.
\end{enumerate}
\end{remark}

\begin{conjecture}\label{con:03}
We conjecture that \eqref{eq:g_unif1} as well as Lemma~\ref{lem:haa1} hold true for all $\alpha,\beta\ge 0$ and consequently
Theorem~\ref{thm:decay1} holds for all $\alpha\ge0$.
\end{conjecture}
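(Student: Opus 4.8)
The plan is to separate the genuinely new analytic input---extending the sup-norm bound \eqref{eq:g_unif1} from the integer lattice to all real $\alpha,\beta\ge0$---from the dispersive consequence, which is comparatively cheap. First I would observe that the decay statement does not need the full conjecture: by the identity \eqref{eq:4.11} the off-diagonal kernel of $\E^{-\I tH_\alpha}$ is governed by $\g_n^{(\alpha,m-n)}$ evaluated at $x=\frac{t^2-1}{t^2+1}$, and here the second Jacobi parameter $\beta=m-n$ is always a \emph{nonnegative integer}. Consequently, \eqref{eq:g_unif1} for arbitrary real $\alpha\ge0$ but integer $\beta\ge0$ already yields $\|\E^{-\I tH_\alpha}\|_{\ell^1\to\ell^\infty}\le(1+t^2)^{-1/2}$ for every $\alpha\ge0$ after taking the supremum over $n\le m$. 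Thus the effective target for Theorem~\ref{thm:decay1} is only the \emph{half}-conjecture: real $\alpha$, integer $\beta$.

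For \eqref{eq:g_unif1} itself I would push the Sonin--P\'olya method of Section~\ref{sec:new} (the third proof) from the one-sided weight treated in Lemma~\ref{lem:6.04} to the full symmetric weight carried by $\g_n^{(\alpha,\beta)}$. Writing $y(x)=(1-x)^{\al/2}(1+x)^{\beta/2}P_n^{(\alpha,\beta)}(x)$ and $W=(1-x)^{\al/2}(1+x)^{\beta/2}$, a reduction of the Jacobi differential equation \cite[(4.2.1)]{sz} puts $y$ in self-adjoint form
\[
\big[(1-x^2)y'\big]' + Q(x)\,y = 0,\qquad Q(x)=\lambda_n + 2x\frac{W'(x)}{W(x)} - (1-x^2)\frac{W''(x)}{W(x)},
\]
with $\lambda_n=n(n+\alpha+\beta+1)$. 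The Sonin function $S=y^2+\frac{1-x^2}{Q}(y')^2$ then satisfies $S'=-\big((1-x^2)Q\big)'\,(y'/Q)^2$, so the successive relative maxima of $|\g_n^{(\alpha,\beta)}|^2$ increase or decrease according to the monotonicity of $(1-x^2)Q(x)$. Since for $\alpha,\beta>0$ both endpoint values now \emph{vanish}, the extremum is interior, and I would analyze the rational function $\big((1-x^2)Q\big)'$ to show that the chain of maxima exhibits at most a single rise-then-fall pattern, thereby isolating one dominant maximum and reducing the inequality to a bound at that single point.

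The hard part is precisely calibrating the sharp constant $1$ (and, for Lemma~\ref{lem:haa1}, the refined ratio) at that dominant interior maximum. In Lemma~\ref{lem:6.04} the chain could be anchored at $x=1$ through the explicit value $P_n^{(\alpha,\beta)}(1)^2$, but here every endpoint value is $0$, so the Sonin--P\'olya monotonicity only compares maxima to one another and carries no intrinsic normalization. To supply one, I would pursue two complementary devices. Device (i): an \emph{analytic continuation} in $(\alpha,\beta)$, treating $(\alpha,\beta)\mapsto\max_x|\g_n^{(\alpha,\beta)}(x)|$ as a real-analytic function that is $\le1$ on $\N_0^2$ and attempting to upgrade this lattice control to all real parameters via a Carlson-type / Phragm\'en--Lindel\"of principle; this is the delicate point, because a bound on a lattice does not by itself propagate without an a priori growth and subordination structure. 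Device (ii): a \emph{monotonicity in the parameter}, namely attempting to prove that $\max_x|\g_n^{(\alpha,\beta)}(x)|$ is nonincreasing in $\alpha$ for fixed integer $\beta$ and $n$; combined with the integer result this would sandwich the real-$\alpha$ value below its value at $\floor{\alpha}$, settling at least the dispersive consequence. I expect device (ii) to be the cleanest route to Theorem~\ref{thm:decay1}, while for the exact refined constant of Lemma~\ref{lem:haa1} across all real $\alpha,\beta\ge0$ I would instead revisit the analytic proof of Haagerup--Schlichtkrull in \cite{haa} and verify that integrality of $\alpha,\beta$ is inessential there. The principal obstacle throughout is this calibration step: the oscillation-theoretic machinery pins the \emph{location} and monotone ordering of the extrema but not the absolute constant, and making that constant rigorous for non-integer parameters is exactly what the authors defer.
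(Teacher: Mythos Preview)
This statement is a \emph{conjecture}; the paper does not prove it and offers no argument beyond the numerical evidence mentioned in Remark~\ref{rem:}. There is therefore no ``paper's proof'' to compare against. What you have written is not a proof either, but a research outline with the central difficulties left open --- and you say as much yourself.

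Your reduction in the first paragraph is correct and worth recording: since only $\beta=m-n\in\N_0$ occurs in \eqref{eq:4.11}, the extension of Theorem~\ref{thm:decay1} to all real $\alpha\ge0$ needs only \eqref{eq:g_unif1} for real $\alpha\ge0$ and integer $\beta\ge0$, not the full conjecture. That is a genuine simplification the paper does not spell out.

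Beyond that, the proposal has a clear gap at exactly the point you identify. The Sonin--P\'olya analysis for $y=(1-x)^{\alpha/2}(1+x)^{\beta/2}P_n^{(\alpha,\beta)}$ gives only the \emph{ordering} of successive maxima, and since $y$ vanishes at both endpoints there is no anchor value to normalize against; this is why the paper's third proof in Section~\ref{sec:new} needed the independent input \eqref{eq:g_unif1} via Lemma~\ref{lem:6.05}. Your device~(i), propagating a lattice bound to real parameters by Carlson/Phragm\'en--Lindel\"of, is not just ``delicate'' but fails outright without a suitable growth hypothesis in the complex $\alpha,\beta$ directions, which you do not supply. Device~(ii), monotonicity of $\max_x|\g_n^{(\alpha,\beta)}(x)|$ in $\alpha$, is an unproved assertion of roughly the same strength as the conjecture itself; no mechanism is offered for why it should hold. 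Finally, the suggestion to ``verify that integrality is inessential'' in the Haagerup--Schlichtkrull argument for Lemma~\ref{lem:haa1} is precisely what is at stake --- their proof of \eqref{eq:g_unif2} (see \cite[p.~234]{haa}) uses a combinatorial/induction structure tied to integer parameters, and removing that restriction is the content of the conjecture, not a routine check.

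In short: useful first observation, but the remainder is a list of plausible attack lines, each with its decisive step missing. That is consistent with the paper's own stance that this is open.
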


Applying the Haagerup--Schlichtkrull inequality \eqref{eq:g_unif3} to \eqref{eq:4.11} we obtain another estimate, which holds for all $\alpha\ge 0$:

\begin{theorem}\label{thm:decay2}
Let $\alpha\ge 0$. There is a positive constant $C<6\sqrt{2}$ such that the following inequality
\be\label{eq:decay_c}
\big|\E^{-\I tH_\alpha}(n,m)\big| \le  \frac{C|t|^{-1/2}}{\sqrt[4]{n+m+\alpha+1}},
\ee	
holds for all $n$, $m\in\N_0$ and $t\neq 0$.
\end{theorem}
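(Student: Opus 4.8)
The plan is to start from the key identity \eqref{eq:4.11}, which already expresses the kernel $\E^{-\I tH_\alpha}(n,m)$ in terms of the normalized weighted Jacobi function $\g_n^{(\alpha,m-n)}$. Writing $x = (t^2-1)/(t^2+1)$, we have the relation $1-x^2 = 4t^2/(t^2+1)^2$, so that $(1-x^2)^{1/4} = \sqrt{2}\,|t|^{1/2}/\sqrt{t^2+1}$. The idea is therefore to insert the Haagerup--Schlichtkrull bound \eqref{eq:g_unif3} into \eqref{eq:4.11} after multiplying and dividing by the factor $(1-x^2)^{1/4}$, so that the combination $(1-x^2)^{1/4}\g_n^{(\alpha,m-n)}(x)$ becomes available for estimation.

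Concretely, I would first assume (by the symmetry \eqref{eq:explicit} of the kernel in $n$ and $m$) that $m\ge n$, so that $\beta := m-n \ge 0$ and Theorem \ref{th:haag} applies with parameters $\alpha\ge 0$ and $\beta = m-n \ge 0$. Starting from
\[
\big|\E^{-\I tH_\alpha}(n,m)\big|
= \frac{1}{\sqrt{1+t^2}}\,\big|\g_n^{(\alpha,m-n)}(x)\big|,
\]
I would multiply the right-hand side by $(1-x^2)^{1/4}/(1-x^2)^{1/4}$ and use the explicit value $(1-x^2)^{1/4} = \sqrt{2}\,|t|^{1/2}/\sqrt{1+t^2}$ computed above. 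This turns the prefactor $\frac{1}{\sqrt{1+t^2}}\cdot\frac{1}{(1-x^2)^{1/4}}$ into $\frac{1}{\sqrt{2}\,|t|^{1/2}}$, and leaves the bounded quantity $(1-x^2)^{1/4}\big|\g_n^{(\alpha,m-n)}(x)\big|$. Applying \eqref{eq:g_unif3} then gives
\[
\big|\E^{-\I tH_\alpha}(n,m)\big|
\le \frac{1}{\sqrt{2}\,|t|^{1/2}}\cdot
\frac{C}{\sqrt[4]{2n+(m-n)+\alpha+1}}
= \frac{C}{\sqrt{2}}\,\frac{|t|^{-1/2}}{\sqrt[4]{n+m+\alpha+1}},
\]
where $C<12$ is the constant from Theorem \ref{th:haag}. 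Renaming $C/\sqrt{2}$ as the new constant yields a bound of the claimed form with constant $<12/\sqrt{2}=6\sqrt{2}$, matching the statement.

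There is essentially no deep obstacle here: the entire result is a direct substitution of one already-proved inequality into an already-established identity, together with a one-line trigonometric/algebraic simplification of the weight. The only points requiring a little care are verifying the bookkeeping $2n+\beta+\alpha+1 = n+m+\alpha+1$ when $\beta=m-n$, and tracking the constant so that $C/\sqrt{2}<6\sqrt{2}$ genuinely follows from $C<12$. The case $n>m$ is handled by symmetry of the kernel, and the excluded point $t=0$ is precisely where the $|t|^{-1/2}$ factor blows up, which is why the statement restricts to $t\neq 0$.
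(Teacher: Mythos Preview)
Your proposal is correct and follows exactly the approach indicated in the paper: the theorem is obtained by inserting the Haagerup--Schlichtkrull inequality \eqref{eq:g_unif3} into the identity \eqref{eq:4.11}, together with the elementary computation $(1-x^2)^{1/4}=\sqrt{2}\,|t|^{1/2}/\sqrt{1+t^2}$ for $x=(t^2-1)/(t^2+1)$. Your bookkeeping of the constant ($C/\sqrt{2}<6\sqrt{2}$) and of the index $2n+\beta+\alpha+1=n+m+\alpha+1$ is accurate.
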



\begin{remark}
\begin{enumerate}[label=(\roman*), ref=(\roman*), leftmargin=*, widest=iii] 
\item The estimate in Theorem \ref{thm:decay2} provides only a $t^{-1/2}$ decay, however, it gives an $(n+m)^{-1/4}$ decay of the matrix coefficients.
\item Let us also mention that the Erd\'elyi--Magnus--Nevai conjecture \eqref{eq:emn_con} would imply the following estimate
\be
\big|\E^{-\I tH_\alpha}(n,m)\big| \le \frac{C}{\sqrt{|t|}}\frac{(\alpha+|m-n|)^{1/4}}{(n+m+\alpha+1)^{1/2}},\quad t\neq 0. 
\ee 
The latter shows that on diagonals, i.e., when $m-n=const$, the decay of the matrix elements is $n^{-1/2}$ as $n\to \infty$.
However, it does not improve \eqref{eq:decay_c} when $m-n$ tends to infinity. 
\end{enumerate}
\end{remark}

The estimates \eqref{eq:decay} and \eqref{eq:decay_c} provide a non-integrable decay as $t\to\infty$. However, in order to establish stability for soliton type solutions to nonlinear equations it is desirable to have an integrable decay in $t$. As we mentioned in Section \ref{sec:IV}, we expect a decay of order $O(|t|^{-(1+\alpha)})$, however, in weighted spaces. To this end
let $\sigma = \{\sigma(n)\}_{n\ge 0}$ be a positive sequence. Consider the weighted $\ell^p$ spaces equipped with the norm
\begin{equation*}
   \Vert u\Vert_{\ell^p({\sigma})}= \begin{cases} \left( \sum_{n\in\N_0} \sigma(n) |u(n)|^p\right)^{1/p}, & \quad p\in[1,\infty),\\
   \sup_{n\in\N_0} \sigma(n) |u(n)|, & \quad p=\infty. \end{cases}
\end{equation*}
Of course, the case $\sigma\equiv 1$ corresponds to the usual $\ell^p(\sigma)=\ell^p$ spaces without weight. Specifically
we will work with the weights $\sigma_\alpha(n)$, given in
\eqref{eq:sigma_a},
and consider the weighted spaces $\ell^1({\sigma_\alpha})$ and $\ell^\infty({\sigma^{-1}_\alpha})$. Notice that
\be\label{eq:sigmassymp}
\sigma_\alpha(n) = \frac{n^{\alpha/2}}{\sqrt{\Gamma(\alpha+1)}}(1+o(1)),\quad n\to \infty.
\ee 

\begin{theorem}\label{thm:decay_a}
The following equality
\be\label{eq:decay_a}
\|\E^{-\I tH_\alpha}\|_{\ell^1(\sigma_\alpha)\to \ell^\infty(\sigma_\alpha^{-1})} = \left(\frac{1}{1+t^2}\right)^{\frac{1+\alpha}{2}},\quad t\in\R,
\ee
holds for all $\alpha\ge 0$.
\end{theorem}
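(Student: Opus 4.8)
The plan is to reduce the weighted operator-norm computation to a pointwise bound on the matrix entries, and then to invoke the new Bernstein-type inequality of Theorem~\ref{th:VImain}. First I would record the elementary fact that for a symmetric kernel $A=(A(n,m))_{n,m\in\N_0}$ one has
\[
\|A\|_{\ell^1(\sigma_\alpha)\to\ell^\infty(\sigma_\alpha^{-1})}
=\sup_{n,m\in\N_0}\frac{|A(n,m)|}{\sigma_\alpha(n)\,\sigma_\alpha(m)}.
\]
The upper bound here follows from the triangle inequality (writing $|u(m)|=\sigma_\alpha(m)^{-1}w(m)$ and noting $\|u\|_{\ell^1(\sigma_\alpha)}=\|w\|_{\ell^1}$), while the lower bound is obtained by testing on rescaled point masses $u=\sigma_\alpha(m_0)^{-1}\delta_{m_0}$, which have unit $\ell^1(\sigma_\alpha)$-norm. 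Since $\E^{-\I tH_\alpha}(n,m)$ is symmetric in $n,m$ by Theorem~\ref{thm:explicit} and $\sigma_\alpha(n)\sigma_\alpha(m)$ is symmetric as well, it suffices to take the supremum over $m\ge n$.

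Next I would substitute the explicit formula \eqref{eq:explicit2} and simplify the prefactor against $\sigma_\alpha(n)\sigma_\alpha(m)=(\binom{n+\alpha}{n}\binom{m+\alpha}{m})^{1/2}$. A short computation collapses the two Pochhammer ratios: the quotient of $\sqrt{(\alpha+1)_m n!/((\alpha+1)_n m!)}$ by $\sqrt{(\alpha+1)_n(\alpha+1)_m/(n!\,m!)}$ equals $n!/(\alpha+1)_n=\binom{n+\alpha}{n}^{-1}$. After the change of variables $x=\frac{t^2-1}{t^2+1}$, so that $\frac{1+x}{2}=\frac{t^2}{1+t^2}$, this leaves
\[
\frac{|\E^{-\I tH_\alpha}(n,m)|}{\sigma_\alpha(n)\,\sigma_\alpha(m)}
=\left(\frac{1}{1+t^2}\right)^{\frac{1+\alpha}{2}}
\binom{n+\alpha}{n}^{-1}
\left(\frac{1+x}{2}\right)^{(m-n)/2}
\abs{P_n^{(\alpha,m-n)}(x)},\qquad m\ge n.
\]

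The crux is to apply Theorem~\ref{th:VImain} with $\beta=m-n\in\N_0$: since $\beta$ is a nonnegative integer, $\beta-\floor{\beta}=0$, so the hypothesis reduces exactly to $\alpha\ge0$, and \eqref{eq:Bern_a0} gives $(\frac{1+x}{2})^{\beta/2}|P_n^{(\alpha,\beta)}(x)|\le\binom{n+\alpha}{n}$. This cancels the factor $\binom{n+\alpha}{n}^{-1}$ and yields the uniform upper bound $(1+t^2)^{-(1+\alpha)/2}$ for the supremum. I expect this to be the main obstacle only in the sense that it is the whole point: the inequality \eqref{eq:Bern_a0} is precisely calibrated so that the entry-wise quotient saturates at its natural scale, whereas the cruder bounds of Lemma~\ref{lem:explest} would not suffice to obtain equality.

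Finally, for the matching lower bound I would examine the diagonal entry at $n=m=0$, where the prefactor $\binom{n+\alpha}{n}^{-1}$, the weight factor $(\frac{1+x}{2})^{0}$, and $P_0^{(\alpha,0)}(x)=1$ all equal $1$. Equivalently, by \eqref{eq:erd1} one has $\E^{-\I tH_\alpha}(0,0)=(1+\I t)^{-(1+\alpha)}$ and $\sigma_\alpha(0)=1$, so $|\E^{-\I tH_\alpha}(0,0)|=(1+t^2)^{-(1+\alpha)/2}$. Hence the supremum is attained and the inequality is an equality. Beyond the routine prefactor simplification and the invocation of Theorem~\ref{th:VImain}, the only point to verify carefully is the operator-norm identity, namely that the relevant suprema are finite and that the convention for the weighted dual pairing is the intended one.
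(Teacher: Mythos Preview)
Your proof is correct and follows essentially the same approach as the paper: both reduce the weighted $\ell^1\to\ell^\infty$ norm to the supremum over $n,m$ of $|\E^{-\I tH_\alpha}(n,m)|/(\sigma_\alpha(n)\sigma_\alpha(m))$, rewrite this via \eqref{eq:explicit2} and the substitution $x=\frac{t^2-1}{t^2+1}$ as in \eqref{eq:4.09}, apply the Bernstein-type bound \eqref{eq:Bern_a0} with $\beta=m-n\in\N_0$ for the upper bound, and use the $(0,0)$ entry for the matching lower bound. You are slightly more explicit than the paper in spelling out the operator-norm identity and in checking that $\alpha\ge0$ meets the hypothesis $\alpha\ge\beta-\floor{\beta}$ of Theorem~\ref{th:VImain}, but the argument is the same.
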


\begin{proof}
First of all, noting that $\E^{-\I tH_\alpha}(0,0) = (1+\I t)^{-1-\alpha}$ (see Corollary \ref{cor:cases}(i)), we get
\[
\|\E^{-\I tH_\alpha}\|_{\ell^1(\sigma_\alpha)\to \ell^\infty(\sigma_\alpha^{-1})} \ge \left(\frac{1}{1+t^2}\right)^{\frac{1+\alpha}{2}},\quad t\in\R.
\]
The converse inequality 
\be\label{eq:4.08}
\frac{\big|\E^{-\I tH_\alpha}(n,m)\big|}{\sigma_\alpha(n)\sigma_\alpha(m)} \le \left(\frac {1}{1+t^2}\right)^{\frac{1+\alpha}{2}} ,\quad t\in\R,\,\, n,m\in\N_0, 
\ee
follows from the Bernstein-type estimate \eqref{eq:Bern_a0}. Indeed, by \eqref{eq:explicit}, it suffices to consider the case $n\le m$. Using \eqref{eq:explicit2} and making the change of variables \eqref{eq:x=t}, we get
\be\label{eq:4.09}
\left({1+t^2}\right)^{\frac{1+\alpha}{2}}\frac{\abs{\E^{-\I tH_\alpha}(n,m)}}{\sigma_\alpha(n)\sigma_\alpha(m)} = \binom{n+\alpha}{n}^{-1} 
 \left(\frac {1+x}{2}\right)^{\frac{m-n}{2}}\abs{P_n^{(\alpha,m-n)}(x)}.
\ee
However, by \eqref{eq:Bern_a0}, the right-hand side is less than $1$, which completes the proof.
\end{proof}

\begin{remark}
An inspection of $\E^{-\I tH_\alpha}(n,m)$ with $n=m=1$ (see the proof of Corollary \ref{cor:cases_est}) shows that \eqref{eq:Bern_a0} is no longer true for $\alpha <0$. However, 
we expect that the following estimate
\[
\|\E^{-\I tH_\alpha}\|_{\ell^1\to \ell^\infty} = \OO(|t|^{-(1+\alpha)}),\quad t\to\infty,
\]
holds true for all $\alpha\in(-1,0)$.
\end{remark}

\section{Conclusions}\label{sec:VII}

\subsection{A hunt for Bernstein-type inequalities} 

The main aim of this paper was to prove dispersive decay for the evolution group $\E^{-\I tH_\alpha}$. It turned out that this problem is closely related to Bernstein-type inequalities for \eqref{eq:Bern_ab} and, in particular, has led us to new Bernstein-type inequalities \eqref{eq:Bern_a0} and \eqref{eq:B01}. In fact, the search for an optimal decay in $t$ or in $m$ and $n$ for the kernel $\E^{-\I tH_\alpha}(n,m)$ leads to a wider class of Bernstein-type inequalities.
More precisely, recall the change of variables \eqref{eq:x=t} and let $\eta\in [0, {1+\alpha}]$, $\nu\ge 0$ be fixed.
Then \eqref{eq:4.09}, after substitution of \eqref{eq:sigma_a}, can be rewritten as
\begin{align}\label{eq:explicit3}
\begin{split}
&(1+t^2)^{\frac{\eta}{2}}\big|\E^{-\I tH_\alpha}(n,m)\big| \left(\frac{t^2}{1+t^2}\right)^{\frac{\nu}{2}} =\\
&\quad  \frac{\sigma_\alpha(m)}{\sigma_\alpha(n)}\,
 \left(\frac{1+x}{2}\right)^{\frac{m-n+\nu}{2}} \left(\frac{1-x}{2}\right)^{\frac{1+\alpha-\eta}{2}} \abs{P_n^{(\alpha,m-n)}\left(x\right)},
\end{split}
\end{align}
for all $n\le m$. Let $\sigma = \{\sigma(n)\}_{n\ge 0}$ be a positive weight. Noting that
\[
\|\E^{-\I tH_\alpha}\|_{\ell^1(\sigma)\to \ell^\infty(\sigma^{-1})} = \sup_{n,m\in\N_0} \sigma(n)^{-1}\abs{\E^{-\I tH_\alpha}(n,m)} \sigma(m)^{-1},
\]
we conclude that the dispersive decay estimate
\be\label{eq:decay_eta}
\|\E^{-\I tH_\alpha}\|_{\ell^1(\sigma)\to \ell^\infty(\sigma^{-1})} \le C(1+t^2)^{-\eta/2},\quad t\in\R,
\ee
would follow from the Bernstein-type bound
\begin{equation}
\left(\frac{1-x}{2}\right)^{\frac{1+\alpha-\eta}{2}} \left(\frac{1+x}{2}\right)^{\frac{m-n+\nu}{2}}
\abs{P_n^{(\alpha,m-n)}\left(x\right)}\le
C \sigma(n)\sigma(m)
\sqrt{\frac{(\alpha+1)_n\,m!}{(\alpha+1)_m\,n!}}
\end{equation}
for all $n\le m$ and $x\in(-1,1)$. 
Clearly, the latter is a uniform weighted estimate for \eqref{eq:Bern_ab} with $a=\frac{1+\alpha-\eta}{2}$ and $b=\frac{\beta+\nu}{2}$.
In this respect let us mention that our Theorem~\ref{th:VImain} gives rise to $\eta=1+\alpha$ and $\nu=0$; the estimates \eqref{eq:g_unif1}--\eqref{eq:g_unif2} correspond to the case $\eta=1$ and $\nu=0$; the Erdelyi--Magnus--Nevai conjecture \eqref{eq:emn_con} and the Haagerup--Schlichtkrull inequality \eqref{eq:g_unif3} correspond to $\eta=\nu=1/2$. 
 
\subsection{1-D spherical Schr\"odinger operators} Let us finish this paper by comparing our results with the recent study of dispersive estimates for the one-dimensional spherical Schr\"odinger operators
\[
{\rm H}_l = -\frac{d^2}{dx^2} + \frac{l(l+1)}{x^2},\quad l\ge -\frac{1}{2},
\]  
acting in $L^2(\R_+)$ (${\rm H}_l$ denotes the Friedrichs extension if $l\in (-1/2,1/2)$).
 In the free case $l=0$, one has 
 \[
 \|\E^{-\I t{\rm H}_0}\|_{L^1(\R_+)\to L^\infty(\R_+)} = \OO(|t|^{-1/2}),\quad t\to \infty.
 \] 
 It was shown in \cite{kotr} (see also \cite{ktt}) that $\|\E^{-\I t{\rm H}_l}\|_{L^1\to L^\infty} = \OO(|t|^{-1/2})$ as $t\to\infty$ for all $l\ge -1/2$. On the other hand, considering weighted $L^1\to L^\infty$ estimates, one can improve the decay in $t$ for positive $l>0$ \cite{ktt,kotr}: 
\[
\norm{\E^{-\I t {\rm H}_{l}}}_{L^1(\R_+;x^l) \to L^\infty(\R_+;x^{-l})}
= \mathcal{O}(|t|^{-l-1/2}),\quad t\to\infty.
\]
Since $\alpha$ in \eqref{eq:H0} can be seen as a measure of the delocalization of the field configuration and it is related to the planar angular momentum \cite{a13}, our dispersive decay  estimates \eqref{eq:decay} and \eqref{eq:decay_a} can be viewed as analogues of the above mentioned results for spherical Schr\"odinger operators from \cite{ktt,kotr}.

\appendix

\iflong

\section{Two alternate proof for the main theorem}
\label{secapp}

\subsection{The addition formula for disk polynomials}\label{sec:II.02}

Following \cite{koII,koIII}, let $R_{n}^{(\alpha,\beta)}$ denote the Jacobi polynomials normalized as in \eqref{eq:normat1}.
Consider the {\em disk polynomials} (see, e.g., \cite[\S 2.6.3]{dunxu} and \cite[\S 3.3]{ko75}, however, with a different notation)
\be\label{eq:RmnA}
R_{m,n}^{(\alpha)}(r\E^{\I\phi})=r^{|m-n|}\E^{\I(m-n)\phi}R_{\min(m,n)}^{(\alpha,|m-n|)}\big(2r^2-1\big).
\ee
For $\alpha=q-2$ with an integer $q\ge 2$ and under a suitable choice of coordinates on the unit sphere $\mathbb{S}^{2q}$ in $\C^q$, these functions are zonal surface harmonics of type $(m,n)$ as introduced by Ikeda \cite{ik}.
This interpretation of disk polynomials was the key to the following addition formula established in \cite{koII,koIII}.

\begin{theorem}[\cite{koIII}] 
Let $\alpha>0$. The following addition formula holds:
\begin{align}
R_{m,n}^{(\alpha)}&\big(\cos(\theta_1)\E^{\I\phi_1}\cos(\theta_2)\E^{\I\phi_2} + \sin(\theta_1)\sin(\theta_2)r\E^{\I\psi}\big) \nn \\
&= \sum_{k=0}^m\sum_{l=0}^n c_{m,n,k,l}^{(\alpha)}(\sin(\theta_1))^{k+l}R_{m-k,n-l}^{(\alpha+k+l)}\big(\cos(\theta_1)\E^{\I\phi_1}\big) \label{eq:addf-la}\\
&\qquad \times (\sin(\theta_2))^{k+l}R_{m-k,n-l}^{(\alpha+k+l)}\big(\cos(\theta_2)\E^{\I\phi_2}\big)R_{k,l}^{(\alpha-1)}(r\E^{\I\psi}),\nn
\end{align}
where
\be
c_{m,n,k,l}^{(\alpha)} = \frac{\alpha}{\alpha+k+l}\binom{m}{k}\binom{n}{l}\frac{(\alpha+n+1)_k(\alpha+m+1)_l}{(\alpha+l)_k(\alpha+k)_l}.
\ee
\end{theorem}

The addition formula \eqref{eq:addf-la} leads to \eqref{eq:Bern_a0} for integer $\beta$.

\begin{proof}[Proof of Theorem \ref{th:VImain}: The case $\alpha> 0$, $\beta\in\N_0$]
Setting $\theta_1=\theta_2=\theta\in[0,\pi]$,
$\phi_1=\phi_2=\psi=0$, $r=1$ in \eqref{eq:addf-la}
and assuming $n\le m$, we end up with
 \begin{align}
1=R_{m,n}^{(\alpha)}&\big(1\big) = \sum_{k=0}^m\sum_{l=0}^n c_{m,n,k,l}^{(\alpha)}(\sin(\theta))^{2(k+l)}R_{m-k,n-l}^{(\alpha+k+l)}\big(\cos(\theta)\big)^2.
\end{align}
In particular, since all summands are nonnegative and using the normalization, we easily get the following estimate (notice that $c_{m,n,0,0}^{(\alpha)}=1$)
\be\label{eq:5.07}
R_{m,n}^{(\alpha)}\big(\cos(\theta)\big)^2 \le 1,\quad \theta \in [0,\pi],
\ee
which proves the claim after a simple change of variables.
Then the case $\alpha=0$, $\beta\in\N_0$ follows by continuity.
\end{proof}

\begin{remark}
A different proof of \eqref{eq:5.07} can be found in \cite[Proposition 2.6.7(i)]{dunxu}.
\end{remark}

\subsection{The Sonin--Poly\'a theorem}

Here we provide a direct proof for Theorem~\ref{th:VImain} using the
Sonin--P\'olya theorem and the inequality \eqref{eq:g_unif1},
which gives some further insight into the behavior of the left-hand
side of \eqref{eq:Bern_a0}.

We divide the proof in three steps. First, let us establish an explicit neighborhood of $x=1$ where \eqref{eq:Bern_a0} holds.
For this we recall the Sonin--P\'olya theorem \cite[footnote to Theorem 7.31.1]{sz}, which associates with
a solution $y$ of a differential equation

\begin{equation}\label{K10}
(py')'+q y=0
\end{equation}
a {\em Sonin function}

\begin{equation}\label{K8}
S(x):=y(x)^2+\frac{p(x)}{q(x)}\,y'(x)^2
\end{equation}
and then we observe that 
\[
S'(x)=-(p q)'(x)\,\left(\frac{y'(x)}{q(x)}\right)^2,
\] 
by which successive relative maxima of
$y^2$ form an increasing or decreasing sequence according as
$pq$ decreases or increases on the corresponding interval.

\begin{lemma}\label{lem:6.04}
Let $\alpha$, $\beta\ge 0$ and $n\in\N_0$.
Put $\lambda_n = n(n+\alpha+\beta+1)$.
There are points
\be
-1 \le x_0 \le x_1 \le 1
\label{K9}
\ee
given explicitly by
\[
x_0 = -1+\frac{2\beta^2}{\beta^2+2\beta(1+\alpha)+4\lambda_n},
\quad
x_1= 1-\frac{2(1+2\alpha) (\beta + \beta  \alpha + 2 \lambda_n)}{(1+\alpha) \big(\beta^2 + 2\beta(1+\alpha) +4\lambda_n \big)},
\]
 such that the relative maxima of
\[
\left(\frac{1+x}{2}\right)^\beta \big|P_n^{(\alpha,\beta)}\left(x\right)\big|^2
\]
are increasing on $(x_1,1]$ and decreasing on $(x_0,x_1)$. Moreover,
inequality~\eqref{eq:Bern_a0} holds on $[x_1,1]$ and there are no relative
maxima on $[-1,x_0)$.
\end{lemma}

\begin{proof}
Abbreviate
\[
y(x):= \left(\frac{1+x}{2}\right)^{\beta/2} P_n^{(\alpha,\beta)}
\left(x\right).
\]
Then rewriting of the differential equation \cite[(4.2.1)]{sz} for
Jacobi polynomials shows that $y$ satisfies \eqref{K10} with
\[
p(x)=(1-x)^{\al+1} (1+x)\quad{\rm and}\quad
4 p(x)q(x)=f(x) (1-x)^{2\al+1},
\]
where
\[
f(x)=-\beta^2 +2\beta(1 + \alpha)+4 \lambda_n +
x \big(\beta^2 + 2\beta(1+ \alpha) + 4\lambda_n\big).
\]
The corresponding Sonin function $S$ given by \eqref{K8}
then has a singularity
at the zero $x_0$ of $f$ such that $S(x)\to-\infty$ or $+\infty$
according as $x\uparrow x_0$ or $x\downarrow x_0$.
Then a calculation shows that
\[
4(1-x)^{-2\alpha} (pq)'(x)=
\big(\beta^2 + 2\beta(1+ \alpha) + 4\lambda_n\big)(1-x)
-(2\al+1)f(x),
\]
which has a zero at $x_1$. The inequalities \eqref{K9} are easily
checked. Now we see that
$S(x)$ decreases from $0$ to $-\infty$ on $[-1,x_0)$, decreases from
$\infty$ to $S(x_1)$ on $(x_0,x_1]$
and increases from $S(x_1)$ to $S(1)=P_n^{(\alpha,\beta)}(1)^2$ on
$[x_1,1]$.
In particular, $S(x)<0$ on $(-1,x_0)$ and hence there cannot be
any maxima of $y(x)^2$ in this interval.
\end{proof}

Now let us find an explicit neighborhood of $x=-1$ where \eqref{eq:Bern_a0} holds. 

\begin{lemma}\label{lem:6.05}
Inequality \eqref{eq:Bern_a0} holds on the interval
\[
[-1,x_2), \qquad x_2= 1-2 \left(\binom{n+\alpha}{n}\binom{n+\alpha+\beta}{n+\beta}\right)^{-1/\alpha}
\]
for all indices $\beta,\alpha$ for which \eqref{eq:g_unif1} holds.
\end{lemma}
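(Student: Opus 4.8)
The plan is to obtain \eqref{eq:Bern_a0} on $[-1,x_2)$ as a direct, reversible consequence of the uniform bound \eqref{eq:g_unif1}, by trading the factor $\left(\frac{1-x}{2}\right)^{\alpha/2}$ built into $\g_n^{(\alpha,\beta)}$ for a numerical constant that is harmless once $x$ is bounded away from the endpoint $x=1$. First I would write out the definition \eqref{eq:g} and solve $|\g_n^{(\alpha,\beta)}(x)|\le1$ for the quantity of interest, obtaining
\begin{equation}\label{eq:L605a}
\left(\frac{1+x}{2}\right)^{\beta/2}\big|P_n^{(\alpha,\beta)}(x)\big|
\le \left(\frac{\Gamma(n+\alpha+1)\Gamma(n+\beta+1)}{\Gamma(n+1)\Gamma(n+\alpha+\beta+1)}\right)^{1/2}\left(\frac{1-x}{2}\right)^{-\alpha/2}.
\end{equation}
The right-hand side is increasing in $x$ and blows up as $x\to1$, but for $x$ bounded away from $1$ it is small; the task is therefore to determine the precise threshold below which it does not exceed $\binom{n+\alpha}{n}$.

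Next I would impose that the right-hand side of \eqref{eq:L605a} be at most $\binom{n+\alpha}{n}$ and isolate $\left(\frac{1-x}{2}\right)^{\alpha/2}$. Using $\binom{n+\alpha}{n}=\Gamma(n+\alpha+1)/\big(\Gamma(\alpha+1)\Gamma(n+1)\big)$ to clear the binomial coefficient, this is equivalent to
\begin{equation}\label{eq:L605b}
\left(\frac{1-x}{2}\right)^{\alpha/2}\ge \Gamma(\alpha+1)\left(\frac{\Gamma(n+1)\Gamma(n+\beta+1)}{\Gamma(n+\alpha+1)\Gamma(n+\alpha+\beta+1)}\right)^{1/2}.
\end{equation}
Raising both sides to the power $2/\alpha$ (which requires $\alpha>0$ and, being a positive power of positive quantities, preserves the inequality) and recognizing the Gamma-function form of the two binomial coefficients, namely
\[
\Gamma(\alpha+1)^{2/\alpha}\left(\frac{\Gamma(n+1)\Gamma(n+\beta+1)}{\Gamma(n+\alpha+1)\Gamma(n+\alpha+\beta+1)}\right)^{1/\alpha}
=\left(\binom{n+\alpha}{n}\binom{n+\alpha+\beta}{n+\beta}\right)^{-1/\alpha},
\]
I would conclude that \eqref{eq:L605b} is equivalent to $\frac{1-x}{2}\ge\frac{1-x_2}{2}$, i.e.\ $x\le x_2$. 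This gives \eqref{eq:Bern_a0} on $[-1,x_2]$, hence in particular on $[-1,x_2)$.

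There is no genuine analytic obstacle here: the argument is simply a rearrangement of \eqref{eq:g_unif1}, and the only real content is the bookkeeping verifying that the constant multiplying $\left(\frac{1-x}{2}\right)^{-\alpha/2}$ in \eqref{eq:L605a}, once balanced against $\binom{n+\alpha}{n}$, collapses exactly to $\left(\binom{n+\alpha}{n}\binom{n+\alpha+\beta}{n+\beta}\right)^{-1/\alpha}$. The point I would emphasize is the hypothesis dependence: the whole derivation rests on \eqref{eq:g_unif1}, which is precisely why the lemma is stated for those $\alpha,\beta$ for which \eqref{eq:g_unif1} is known to hold; the formula for $x_2$ moreover presupposes $\alpha>0$, the case $\alpha=0$ being trivial since then the factor $\left(\frac{1-x}{2}\right)^{-\alpha/2}$ disappears and \eqref{eq:Bern_a0} follows on all of $[-1,1]$ directly from \eqref{eq:g_unif1}.
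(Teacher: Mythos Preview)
Your proof is correct and follows exactly the approach indicated in the paper: insert the bound \eqref{eq:g_unif1} into the definition \eqref{eq:g} and solve the resulting inequality for $x$, recognizing the Gamma-function ratio as the product of the two binomial coefficients. Your explicit handling of the edge case $\alpha=0$ is a welcome addition that the paper leaves implicit.
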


\begin{proof}
This follows upon inserting \eqref{eq:g_unif1} into the desired inequality \eqref{eq:Bern_a0} and solving for $x$.
\end{proof}

We need the following technical lemma, which allows to estimate $x_2$.

\begin{lemma}\label{lem:binom}
The following inequality for binomial coefficients holds for all $x\ge 0$:
\be\label{eq:binom_est}
\binom{x+y}{x} \ge \begin{cases} (x+y)^y, & 0\le y \le 1,\\  (\frac{x+y}{y})^y & 1 \le y.\end{cases}
\ee
\end{lemma}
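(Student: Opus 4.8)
The plan is to prove the two-sided binomial estimate
\be\label{eq:binom_est}
\binom{x+y}{x} \ge \begin{cases} (x+y)^y, & 0\le y \le 1,\\[1mm]  \big(\tfrac{x+y}{y}\big)^y, & 1 \le y,\end{cases}
\ee
for all $x\ge 0$, by working with the gamma-function expression
$\binom{x+y}{x}=\Gamma(x+y+1)/\big(\Gamma(x+1)\Gamma(y+1)\big)$ and reducing each case to a statement about the logarithmically convex function $\Gamma$. First I would fix $y$ and regard $g(x):=\log\binom{x+y}{x}=\log\Gamma(x+y+1)-\log\Gamma(x+1)-\log\Gamma(y+1)$ as a function of $x\ge 0$, together with the target $h(x):=y\log(x+y)$ (respectively $y\log\big(\tfrac{x+y}{y}\big)$ in the second case). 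The goal becomes showing $g(x)\ge h(x)$ on $[0,\infty)$.

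The cleanest route is to compare derivatives and check the inequality at an endpoint. Since $\log\Gamma$ is convex with derivative the digamma function $\psi$, one has $g'(x)=\psi(x+y+1)-\psi(x+1)$, which by the integral or series representation of $\psi$ equals $\int_0^\infty \tfrac{\E^{-(x+1)t}(1-\E^{-yt})}{1-\E^{-t}}\,dt$, a quantity I would compare against $h'(x)=\tfrac{y}{x+y}$. For the case $0\le y\le 1$ the natural endpoint is $x=0$, where $\binom{y}{0}=1=(0+y)^0$ only at $y=0$, so instead I would check at the other extreme: as $x\to\infty$, Stirling's formula gives $\binom{x+y}{x}\sim \tfrac{(x+y)^y}{\Gamma(y+1)}\,(1+o(1))$, and since $\Gamma(y+1)\le 1$ for $y\in[0,1]$ the leading constant is $\ge 1$, so the inequality holds asymptotically; then a monotonicity/convexity argument in $x$ closes the gap on all of $[0,\infty)$. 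For the case $y\ge 1$ the target $\big(\tfrac{x+y}{y}\big)^y$ is milder, and the same Stirling asymptotics together with the fact that $\Gamma(y+1)\le y^y$ (equivalently $y!\le y^y$, immediate from $\Gamma(y+1)=\int_0^\infty t^y\E^{-t}\,dt$ and Stirling) yields the leading comparison.

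Alternatively, and perhaps more transparently, I would avoid asymptotics by using the Gautschi-type bound and log-convexity directly. For $0\le y\le 1$, write $\binom{x+y}{x}=\tfrac{\Gamma(x+y+1)}{\Gamma(x+1)\Gamma(y+1)}$ and apply the classical inequality $\Gamma(x+y+1)\ge (x+1)^y\,\Gamma(x+1)$ valid for $y\in[0,1]$ (a consequence of log-convexity of $\Gamma$ via $\Gamma(s+y)\le \Gamma(s)^{1-y}\Gamma(s+1)^y$ applied with $s=x+1$ in the reverse direction). Combined with $\Gamma(y+1)\le 1$ on $[0,1]$ and the elementary estimate $(x+1)^y\ge$ the required power, this should deliver $(x+y)^y$ up to checking $(x+1)^y\ge (x+y)^y$ fails for $y<1$, so more care is needed here—this is where the main obstacle lies. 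The delicate point is that for $0\le y\le 1$ the bound $(x+y)^y$ is genuinely sharper than the naive $(x+1)^y$, so the log-convexity inequality must be applied in exactly the right direction, and one must track the constant $\Gamma(y+1)$ carefully; I expect the cleanest resolution is to prove $\phi(x):=g(x)-h(x)$ is monotone via $\phi'(x)=\psi(x+y+1)-\psi(x+1)-\tfrac{y}{x+y}$ and to show this has a single sign or a single zero using convexity of $\psi$, then pin down the inequality by its limiting value at one endpoint.
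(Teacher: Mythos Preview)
Your proposal is a plan rather than a proof, and in both regimes the key step is either wrong or left unverified.

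For $0\le y\le 1$: the inequality you invoke, $\Gamma(x+y+1)\ge (x+1)^y\Gamma(x+1)$, goes the \emph{wrong way}. Log-convexity of $\Gamma$ gives $\Gamma(x+y+1)\le\Gamma(x+1)^{1-y}\Gamma(x+2)^y=(x+1)^y\Gamma(x+1)$, so this route cannot produce a lower bound. (Incidentally, $(x+1)^y\ge(x+y)^y$ does \emph{not} fail for $y\le 1$, since $x+1\ge x+y$.) Your fallback --- showing $\phi(x)=g(x)-h(x)$ is monotone by proving $\phi'(x)=\psi(x+y+1)-\psi(x+1)-\tfrac{y}{x+y}$ has a single sign --- is exactly the hard part, and you do not carry it out. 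The paper bypasses this entirely by citing Wendel's inequality, which after the substitution $z=x+y$, $s=1-y$ in $\Gamma(z+s)\le z^s\Gamma(z)$ and the functional equation rearranges to $\Gamma(x+y+1)/\Gamma(x+1)\ge(x+y)^y$; combined with your correct observation $\Gamma(y+1)\le 1$ this is the whole case in one line.

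For $y\ge 1$: Stirling plus $\Gamma(y+1)\le y^y$ only gives the inequality asymptotically as $x\to\infty$, and you never explain how to get it for all $x\ge 0$. The paper's argument here is of a different nature: from Binet's formula one gets $\psi'(z)>1/z$, hence $\psi'(1+x+y)>\tfrac{1}{1+x+y}>\tfrac{x}{(x+y)^2}$ for $y\ge 1$; integrating this in $y$ from $1$ (where there is equality $\psi(x+2)-\psi(x+1)=\tfrac{1}{x+1}$) yields $\psi(1+x+y)-\psi(1+x)\ge\tfrac{y}{x+y}$, and a second integration in $x$ from $0$ gives $\log\binom{x+y}{x}\ge y\log\tfrac{x+y}{y}$. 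This double-integration trick is the missing idea.
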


\begin{remark}
The case $0\le y \le 1$ follows from an inequality due to Wendel \cite[eqn.\ (7)]{wen}. The case $1 \le y$ can be found in, e.g., \cite[eqn.\ (41)]{qncc} (as written there after the formula: {\em the reverse inequality (41) holds if $a>1$}).
\end{remark}

Finally we note:

\begin{lemma}\label{lem:6.06}
Suppose $\alpha\ge1$ and $n\ge1$ or $\alpha\ge0$ and $n\ge2$. Then
\[
x_1 \le x_2.
\]
\end{lemma}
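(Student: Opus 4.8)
The plan is to prove the equivalent statement $1-x_1\ge 1-x_2$. Writing $P:=\binom{n+\alpha}{n}\binom{n+\alpha+\beta}{n+\beta}$ and recalling the explicit formulas for $x_1$ (from Lemma~\ref{lem:6.04}) and $x_2$ (from Lemma~\ref{lem:6.05}), this reads
\[
\frac{(1+2\alpha)\big(\beta(1+\alpha)+2\lambda_n\big)}{(1+\alpha)\big(\beta^2+2\beta(1+\alpha)+4\lambda_n\big)}\;\ge\;P^{-1/\alpha},
\]
that is, $P^{1/\alpha}\ge R$, where $R$ is the reciprocal of the fraction on the left. The first step is to simplify $R$: since its numerator $\beta^2+2\beta(1+\alpha)+4\lambda_n$ equals $\beta^2+2\big(\beta(1+\alpha)+2\lambda_n\big)$, one obtains the clean form
\[
R=\frac{1+\alpha}{1+2\alpha}\left(2+\frac{\beta^2}{\beta(1+\alpha)+2\lambda_n}\right).
\]
(Throughout I take $\alpha>0$, so that $x_2$ is well defined; the borderline value $\alpha=0$ is understood as a limit.)

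Next I would estimate the two sides separately. For the upper bound on $R$ I use $\lambda_n=n(n+\alpha+\beta+1)\ge n\beta$, so that $\beta(1+\alpha)+2\lambda_n\ge\beta(1+\alpha+2n)$ and hence
\[
R\le\frac{2(1+\alpha)}{1+2\alpha}+\frac{(1+\alpha)\,\beta}{(1+2\alpha)(1+\alpha+2n)}.
\]
It is important here to retain the prefactor $(1+\alpha)/(1+2\alpha)$ rather than bounding it crudely by $1$, since for large $\alpha$ this factor is exactly what keeps the inequality true. For the lower bound on $P^{1/\alpha}$ I apply Lemma~\ref{lem:binom} to each factor with $y=\alpha$ (and $x=n$, resp.\ $x=n+\beta$): when $0\le\alpha\le1$ the first branch gives $P^{1/\alpha}\ge(n+\alpha)(n+\alpha+\beta)$, while for $\alpha\ge1$ the second branch gives the weaker bound $P^{1/\alpha}\ge(n+\alpha)(n+\alpha+\beta)/\alpha^2$. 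Writing $D:=\max(1,\alpha^2)$, both cases are summarized by
\[
P^{1/\alpha}\ge\frac{(n+\alpha)^2}{D}+\frac{(n+\alpha)\,\beta}{D}.
\]

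It then remains to match the two bounds term by term. For the $\beta$-linear parts one needs $\tfrac{(n+\alpha)\beta}{D}\ge\tfrac{(1+\alpha)\beta}{(1+2\alpha)(1+\alpha+2n)}$; after cancelling $\beta$ and using $n+\alpha\ge1+\alpha$ (valid since $n\ge1$), this follows from $(1+2\alpha)(1+\alpha)=1+3\alpha+2\alpha^2\ge\max(1,\alpha^2)=D$. For the $\beta$-independent parts one needs $\tfrac{(n+\alpha)^2}{D}\ge\tfrac{2(1+\alpha)}{1+2\alpha}$, and this is precisely where the hypotheses enter: for $\alpha\ge1$ (so $D=\alpha^2$) the inequality reduces at $n=1$ to $1+3\alpha\ge0$ and is increasing in $n$, hence holds for all $n\ge1$; for $0\le\alpha\le1$ (so $D=1$) it follows from $(n+\alpha)^2\ge2\ge\tfrac{2(1+\alpha)}{1+2\alpha}$ as soon as $n\ge2$, and it can genuinely fail at $n=1$ for small $\alpha$, which explains the threshold in the statement. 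Adding the two comparisons yields $P^{1/\alpha}\ge R$, i.e.\ $x_1\le x_2$. I expect the only real difficulty to be the bookkeeping in the regime of large $\alpha$ and small $n$, where $P^{1/\alpha}$ exceeds $R$ only by a slim margin; the two decisive ideas that create enough room are the algebraic collapse of $R$ and the careful retention of the factor $(1+\alpha)/(1+2\alpha)$.
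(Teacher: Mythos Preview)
Your argument is correct and follows essentially the same route as the paper: both proofs rewrite $x_1\le x_2$ as a comparison between $P^{-1/\alpha}$ and the rational expression on the right, bound $P^{1/\alpha}$ via Lemma~\ref{lem:binom} (splitting into the ranges $\alpha\ge1$ and $0\le\alpha\le1$), and then verify the resulting elementary algebraic inequality using $n+\alpha\ge1+\alpha$. Your algebraic organization---collapsing $R$ into the form $\tfrac{1+\alpha}{1+2\alpha}\big(2+\tfrac{\beta^2}{\beta(1+\alpha)+2\lambda_n}\big)$ and then matching the $\beta$-constant and $\beta$-linear parts separately---is a bit more explicit than the paper's, which instead jumps (after the reductions $1+2\alpha\ge2\alpha$ and $n+\alpha\ge1+\alpha$) to the single polynomial inequality $\alpha\big(\beta^2+2\beta(1+\alpha)+4\lambda_n\big)\le 2(\beta(1+\alpha)+2\lambda_n)(n+\alpha+\beta)$ and declares it ``easy to check''; but the content is the same.
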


\begin{proof}
First of all note that $x_1 \le x_2$ is equivalent to
\[
\left(\binom{n+\alpha}{n}\binom{n+\alpha+\beta}{n+\beta}\right)^{-1/\alpha} \le \frac{(1+2\alpha) (\beta(1+  \alpha) + 2 \lambda_n)}{(1+\alpha) \big(\beta^2 + 2\beta(1+\alpha) +4\lambda_n \big)}.
\]
Now inequality \eqref{eq:binom_est} for $\alpha\ge1$ implies that $x_1 \le x_2$ will hold if
\[
\frac{\alpha}{n+\alpha} \frac{\alpha}{n+\alpha+\beta}\le \frac{(1+2\alpha) (\beta(1+  \alpha) + 2 \lambda_n)}{(1+\alpha) \big(\beta^2 + 2\beta(1+\alpha) +4\lambda_n \big)}.
\]
However, it is easy to check that a stronger inequality (note that $1+\alpha\le n+\alpha$ for $n\ge 1$) 
\[
\alpha\big(\beta^2 + 2\beta(1+\alpha) +4\lambda_n \big) \le 2(\beta(1+  \alpha) + 2 \lambda_n)(n+\alpha+\beta)
\]
holds true for $n \ge 1$ and $\beta,\alpha\ge 0$. The case $0\le \alpha\le1$ is similar.
\end{proof}

Combining Lemmas \ref{lem:6.04}, \ref{lem:6.05} and \ref{lem:6.06}, we arrive at Theorem \ref{th:5.12}:

\begin{proof}[Proof of Theorem~\ref{th:5.12}]
Since the cases $n=0,1$ can be checked directly (see Corollary~\ref{cor:cases_est}), combining Lemmas \ref{lem:6.04}, \ref{lem:6.05} and \ref{lem:6.06}, we
conclude that \eqref{eq:Bern_a0} holds for all $\alpha,\beta\ge0$ for which \eqref{eq:g_unif1} holds and hence in particular for $\alpha,\beta\in\N_0$.
\end{proof}
\fi

\bigskip
{\bf Acknowledgments.}
We are indebted to Alexander Aptekarev, Christian Krattenthaler, Margit R\"osler, Walter Van Assche and Michael Voit for discussions on these topics.
We also thank the referees for the careful reading of our manuscript.


\end{document}